\newcommand{\RR}{\mathbb{R}}
\mathchardef\mhyphen="2D 
\DeclareMathOperator*{\argmin}{\arg\!\min}
\newtheorem{theorem}{Theorem}
\numberwithin{theorem}{section}
\newtheorem{lemma}[theorem]{Lemma}
\newtheorem{corollary}[theorem]{Corollary}
\let\originalleft\left
\let\originalright\right
\renewcommand{\left}{\mathopen{}\mathclose\bgroup\originalleft}
\renewcommand{\right}{\aftergroup\egroup\originalright}
\titleformat{\subsubsection}[runin]{\normalfont\bfseries}{\thesubsubsection.}{3pt}{}
\begin{document}
	\title{General Convergence Rates Follow From Specialized Rates Assuming Growth Bounds}
	\author{Benjamin Grimmer\footnote{School of Operations Research and Information Engineering, Cornell University,
			Ithaca, NY 14850, USA;
			\texttt{people.orie.cornell.edu/bdg79/} \newline This material is based upon work supported by the National Science Foundation Graduate Research Fellowship under Grant No. DGE-1650441.}}
	\date{}
	\maketitle

	\begin{abstract}
		Often in the analysis of first-order methods, assuming the existence of a quadratic growth bound (a generalization of strong convexity) facilitates much stronger convergence analysis. Hence the analysis is done twice, once for the general case and once for the growth bounded case. We give a meta-theorem for deriving general convergence rates from those assuming a growth lower bound. Applying this simple but conceptually powerful tool to the proximal point method, the subgradient method, and the bundle method immediately recovers their known convergence rates for general convex optimization problems from their specialized rates. Future works studying first-order methods can assume growth bounds for the sake of analysis without hampering the generality of the results.
		Our results can be applied to lift any rate based on a H\"older growth bound. As a consequence, guarantees for minimizing sharp functions imply guarantees for both general functions and those satisfying quadratic growth.
	\end{abstract}
	\section{Introduction}
Throughout the literature on first-order optimization methods, improved convergence rates typically follow from assuming the given objective function possesses a growth lower bound. Consider the nonsmooth optimization problem
\begin{equation}
	\min_{x\in\RR^n} F(x)
\end{equation}
for some convex $F\colon \RR^n\rightarrow \RR$ that attains its minimum value at some $x^*$.  Often convergence guarantees can be improved by assuming a quadratic growth lower bound~\cite{Necoara2019}
\begin{equation}\label{eq:quadratic-growth}
	F(x) \geq F(x^*) + \alpha\|x-x^*\|^2
\end{equation}
for some $\alpha>0$, which is a common generalization of strong convexity. Even stronger convergence guarantees are often made possible by instead assuming a sharp growth lower bound
\begin{equation}\label{eq:sharp-growth}
F(x) \geq F(x^*) + \alpha\|x-x^*\|.
\end{equation}

Typically different convergence proofs are required for the cases of minimizing $F$ with and without assuming the existence of each growth lower bound. For example, the following table summarizes the number of iterations required to reach $\epsilon$-accuracy for three standard methods for nonsmooth optimization: the {\it Proximal Point Method} with stepsizes $\rho_k>0$ defined by
\begin{equation}\label{eq:proximal}
	x_{k+1} = \mathrm{prox}_{\rho_k, F}(x_{k})
\end{equation}
where $\mathrm{prox}_{\rho, F}(x_k) = \argmin\{F(\cdot) + \frac{1}{2\rho}\|\cdot-x_k\|^2\}$ denotes the proximal operator, the {\it Subgradient Method} with stepsizes $\rho_k>0$ defined by
\begin{equation}\label{eq:subgradient}
	x_{k+1} = x_{k}-\rho_kg_k\text{ for some } g_k\in\partial F(x_k)
\end{equation}
where $\partial F(x) = \{g\in\RR^n \mid F(y) \geq F(x) + \langle g, y-x\rangle\ \forall y\in\RR^n\}$ denotes the subdifferential of $F$,
and the {\it Bundle Method} which is defined later in Algorithms~\ref{alg:multiple-cuts} and~\ref{alg:aggregation}.
	\begin{center}
	\renewcommand{\arraystretch}{1.5}
	\begin{tabular}{r | c | c | c}
		Method & General Rate & Quadratic Growth Rate & Sharp Growth Rate\\
		\hline
		Proximal Point& $O(\|x_0-x^*\|^2/\epsilon)$ & $O(\log(1/\epsilon)/\alpha)$ & $O((F(x_0)-F(x^*))/\alpha^2)$\\
		Subgradient & $O(\|x_0-x^*\|^2/\epsilon^{2})$& $O(1/\epsilon\alpha)$ & $O(\log(1/\epsilon)/\alpha^2)$\\
		Bundle & $O(\|x_0-x^*\|^4/\epsilon^{3})$ & $O(\log(1/\epsilon)/\epsilon\alpha^2)$ & N/A
	\end{tabular}
\end{center}
Each of these claimed convergence rates is formalized in Sections~\ref{sec:proximal},~\ref{sec:subgradient}, and~\ref{sec:bundle} for the proximal point method, subgradient method, and bundle method, respectively.

The primary contribution of this paper is a pair of meta-theorems for deriving general convergence rates from rates that assume the existence of a growth lower bound. In terms of the above table, we show that each convergence rate implies all of the convergence rates to its left (up to logarithmic terms). This means that the quadratic growth column's rates imply the all of the general setting's rates. Likewise, each convergence rate under sharp growth implies that method's general and quadratic growth rate. Thus deriving a convergence guarantee for sharp functions bounds the method's behavior in all three cases.

Our proposed technique for lifting convergence rates is independent of the details of the first-order method being considered and critically does not require any modification to the method. As a result, we believe this is a strong addition to the optimization tool belt for studying first-order methods. One can assume growth bounds throughout their analysis without losing generality in terms of the final convergence rates. This shortcut is especially useful for the analysis of more sophisticated methods (like the bundle method) where convergence proofs become increasingly complex.

\subsection{Lifting Specialized Convergence Rates}
We prove our convergence rate lifting theorems using a generalization of quadratic and sharp growth. We say a function $F$ satisfies a H\"older growth bound for some $\alpha>0$ and $p\geq 1$ if
\begin{equation}\label{eq:holder-growth}
F(x) \geq F(x^*) + \alpha\|x-x^*\|^p.
\end{equation}
This is exactly quadratic growth when $p=2$ and sharpness when $p=1$. A number of recent works have focused on showing improved convergence rates for various first-order methods in the presence of H\"older growth~\cite{Johnstone2019,RenegarGrimmer2018,Roulet2017}.

Now we formalize our model for some first-order method $\mathtt{fom}$ that produces a sequence of iterates $\{x_k\}_{k=0}^\infty$ (in our examples, $\mathtt{fom}$ is one of the proximal point, subgradient, or bundle methods). Note that for it to be meaningful to lift a convergence rate to apply to general problems, the inputs to $\mathtt{fom}$ need to be independent of the existence of a growth bound~\eqref{eq:holder-growth}. 
We make the following three assumptions about $\mathtt{fom}$:
\begin{itemize}
	\item[(A1)] Each iteration $k$ of \texttt{fom} computes a point $x_{k+1}$ based on the values of $F$ and $\partial F$ at the set of points $\{x_j\}_{j=0}^{k+1}$.
	\item[(A2)] The distance from any $x_k$ to some fixed $x^*\in\argmin F$ is at most some constant $D>0$.
	\item[(A3)] For some fixed $p\geq 1$, if $F$ satisfies the growth condition~\eqref{eq:holder-growth} with $\alpha>0$, then for any $\epsilon>0$, {\normalfont \texttt{fom}} will an $\epsilon$-minimizer $x_k$ with $k \leq K(x_0,\epsilon,\alpha)$.
\end{itemize}
\paragraph{On the Generality of (A1)-(A3).}
Note that (A1) allows the computation of $x_{k+1}$ to depend on the function and subgradient values at $x_{k+1}$. Hence the proximal point method is included in our model since it is equivalent to $x_{k+1} = x_{k} - \rho_kg_{k+1},\text{ where }g_{k+1}\in\partial F(x_{k+1})$.

We remark that (A2) holds for all three of the proximal point, subgradient, and bundle methods under proper selection of their stepsize parameters. For each method, we provide an upper bound on the constant $D$ later in Lemmas~\ref{lem:proximal-distance}, \ref{lem:subgradient-distance}, and~\ref{lem:bundle-distance}, respectively.

Lastly, notice that (A3) allows the convergence bound $K(x_0,\epsilon,\alpha)$ to depend on $x_0$, which defines the entire sequence of iterates. Thus $K(x_0,\epsilon,\alpha)$ may depend on the initial distance $\|x_0-x^*\|$ and objective gap $F(x_0) - F(x^*)$ as well as constants like $\sup \|g_k\|$ for the subgradient and bundle methods which is at most the Lipschitz constant of $F$ around the iterates.\\

The following convergence rate lifting theorem shows that a general convergence guarantee follows from these three assumptions.
\begin{theorem}[Lifting To General Setting]\label{thm:rate-lifting}
	Consider any method $\mathtt{fom}$ satifying (A1)-(A3) and any function $F$ attaining its minimum somewhere. For any $\epsilon>0$, {\normalfont \texttt{fom}} will find some $x_k$ that is an $\epsilon$-minimizer of $F$ with $k\leq K(x_0,\epsilon,\epsilon/D^p)$.
\end{theorem}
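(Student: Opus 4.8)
\medskip
\noindent\emph{Proof plan.} The plan is to reduce the general case to the growth-bounded one: run \texttt{fom} on a perturbed objective $\tilde F$ that \emph{does} satisfy a H\"older growth bound of exponent $p$ with constant $\alpha=\epsilon/D^p$, yet agrees with $F$ (in value and subgradient) near every iterate until an $\epsilon$-minimizer has been produced. Writing $F^* := F(x^*) = \min F$, I would take
\[
	\tilde F(x) := \max\left\{\, F(x),\ F^* + \tfrac{\epsilon}{D^p}\|x-x^*\|^p \,\right\}.
\]
Since $p\geq 1$, the map $x\mapsto\|x-x^*\|^p$ is convex, so $\tilde F$ is a finite-valued convex function; since $\tilde F\geq F\geq F^*$ everywhere and $\tilde F(x^*)=F^*$, the point $x^*$ remains a minimizer and $\min\tilde F=F^*$; and by construction $\tilde F(x)\geq F^*+\tfrac{\epsilon}{D^p}\|x-x^*\|^p$, so $\tilde F$ satisfies \eqref{eq:holder-growth} with $\alpha=\epsilon/D^p$.

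Next, set $K := K(x_0,\epsilon,\epsilon/D^p)$ and suppose, for contradiction, that none of $x_0,\dots,x_K$ produced by \texttt{fom} on $F$ is an $\epsilon$-minimizer, i.e.\ $F(x_k)-F^*>\epsilon$ for all $k\leq K$. By (A2) we have $\|x_k-x^*\|\leq D$, hence $\tfrac{\epsilon}{D^p}\|x_k-x^*\|^p\leq\epsilon<F(x_k)-F^*$, so $F$ \emph{strictly} exceeds the growth term at $x_k$; by continuity $\tilde F=F$ on a neighborhood of $x_k$, giving $\tilde F(x_k)=F(x_k)$ and $\partial\tilde F(x_k)=\partial F(x_k)$. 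By (A1), the (possibly implicit) relations that generate $x_1,\dots,x_K$ from $x_0$ refer only to $F$ and $\partial F$ evaluated at $x_0,\dots,x_K$, so --- making the same subgradient selections where the method has a choice --- the same sequence $x_0,\dots,x_K$ is a valid run of \texttt{fom} on $\tilde F$. Now apply (A3) to $\tilde F$: it is convex, attains its minimum at $x^*$, and satisfies \eqref{eq:holder-growth} with $\alpha=\epsilon/D^p$, so \texttt{fom} reaches an $\epsilon$-minimizer $x_k$ of $\tilde F$ with $k\leq K(x_0,\epsilon,\epsilon/D^p)=K$; since $k\le K$, this $x_k$ is one of the shared iterates. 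For it we have $\tilde F(x_k)-F^*\leq\epsilon$, while we showed $\tilde F(x_k)=F(x_k)$ and assumed $F(x_k)-F^*>\epsilon$ --- a contradiction. Hence some $x_k$ with $k\leq K$ is an $\epsilon$-minimizer of $F$, which is the claim.

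The step I expect to require the most care is the claim that the two runs coincide: one must check that (A1) really does pin each $x_{k+1}$ to function and subgradient data at the iterates alone (handling the implicit self-dependence, as for the proximal point method), and that ``\texttt{fom} will find'' in (A3) is read as a statement about \emph{every} run --- otherwise a method that is nondeterministic in its subgradient choices could, when run on $\tilde F$, leave the shared trajectory. For the three concrete methods this is routine; for instance, for the proximal point method one can bypass (A1) entirely and observe that $\mathrm{prox}_{\rho_k,F}(x_k)$ also minimizes $\tilde F(\cdot)+\tfrac{1}{2\rho_k}\|\cdot-x_k\|^2$, using only $\tilde F\geq F$ and $\tilde F(x_{k+1})=F(x_{k+1})$.
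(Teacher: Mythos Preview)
Your proof is correct and follows essentially the same approach as the paper: both introduce the auxiliary function $\tilde F(x)=\max\{F(x),\,F(x^*)+\tfrac{\epsilon}{D^p}\|x-x^*\|^p\}$, use (A2) together with the assumption that no $\epsilon$-minimizer has yet been found to conclude $\tilde F=F$ and $\partial\tilde F=\partial F$ at all iterates, invoke (A1) so that the runs on $F$ and $\tilde F$ coincide, and then apply (A3) to $\tilde F$ with $\alpha=\epsilon/D^p$ to obtain a contradiction. Your additional remarks on convexity of $\tilde F$, the neighborhood argument for equality of subdifferentials, and the need to read (A3) as holding for every run are all valid elaborations of points the paper leaves implicit.
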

\begin{proof}
	Suppose that no iterate $x_k$ with $k\leq T$ is an $\epsilon$-minimizer of $F$. 
	Consider the auxiliary function $G(x) = \max\{F(x), F(x^*) + \frac{\epsilon}{D^p}\|x - x^*\|^p\}$. Then (A2) ensures that each $k\leq T$ has 
	$$F(x^*) + \frac{\epsilon}{D^p}\|x_k - x^*\|^p \leq F(x^*) + \epsilon < F(x_k).$$
	Thus around all of the points $\{x_k\}_{k=0}^T$ considered by \texttt{fom}, $G(\cdot) = F(\cdot)$ and $\partial G(\cdot) = \partial F(\cdot)$.
	
	From this, (A1) ensures that applying \texttt{fom} to either $G$ or $F$ produces the same sequence of iterates up to iteration $T$. Since $F$ and $G$ both minimize at $x^*$, we know that no $x_k$ with $k\leq T$ is an $\epsilon$-minimizer of $G$ either. By definition, $G$ satisfies the H\"older growth condition~\eqref{eq:holder-growth} with exponent $p$ and coefficient $\epsilon/D^p$. Hence (A3) implies that $T < K(x_0,\epsilon,\epsilon/D^p)$.
\end{proof}

The key observation underlying this proof is that the given function $F$ cannot be discerned from the auxiliary function $G$ unless an $\epsilon$-accuracy iterate is found. Thus small amounts of H\"older growth can be assumed without effecting the sequence of iterates produced by the algorithm.
A general convergence rate is then given by simply substituting $\alpha$ with $\epsilon/D^p$.

Similarly, any rate based on H\"older growth with exponent $p$ implies rates for that method under any H\"older growth with exponent $q>p$. In terms of our previous table, this shows that convergence guarantees in the sharp column (where $p=1$) imply the guarantees in the quadratic growth column (where $q=2$).

\begin{theorem}[Lifting To Higher-Order Growth Settings]\label{thm:higher-order-rate-lifting}
	Consider any method $\mathtt{fom}$ satisfying (A1) and (A3) and any function $F$ satisfying the growth condition~\eqref{eq:holder-growth} with $q>p$ and $\alpha>0$. For any $\epsilon>0$, {\normalfont \texttt{fom}} will find some $x_k$ that is an $\epsilon$-minimizer of $F$ with $k\leq K(x_0,\epsilon,\alpha^{p/q}\epsilon^{1-p/q})$.
\end{theorem}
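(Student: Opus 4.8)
The plan is to mimic the proof of Theorem~\ref{thm:rate-lifting}, but instead of replacing $F$ by an auxiliary function that ``forces'' growth, we observe that a function satisfying a high-order growth bound ($q > p$) automatically satisfies a lower-order one locally, namely within the ball of radius $D$ around $x^*$ where all iterates live. First I would write down the pointwise inequality: for any $x$ with $\|x - x^*\| \le D$ and any constant $c > 0$, we have
\begin{equation*}
	\alpha \|x-x^*\|^q \geq \min\left\{\alpha\|x-x^*\|^q,\ c\right\},
\end{equation*}
and I want to choose $c$ so that the right-hand side is bounded below by some $\beta \|x-x^*\|^p$ for all such $x$. Splitting on whether $\|x-x^*\|^p \le c/\beta$ or not, the bound $\beta\|x-x^*\|^p \le c$ holds in the first case automatically, and in the second case we need $\alpha\|x-x^*\|^q \ge \beta \|x-x^*\|^p$, i.e. $\|x-x^*\|^{q-p} \ge \beta/\alpha$. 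So the construction works provided $c/\beta \ge (\beta/\alpha)^{p/(q-p)}$ — this is the routine algebra that pins down the relationship between $\beta$ and the cutoff. The natural normalization, matching the $\epsilon$-scale of the problem, is to take $c = \epsilon$, which yields $\beta = \alpha^{p/q}\epsilon^{1-p/q}$ after solving; I would double-check this exponent bookkeeping since getting it exactly right is the whole point of the stated bound.

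Next I would run the same indistinguishability argument as before. Suppose no iterate $x_k$ with $k \le T$ is an $\epsilon$-minimizer, and set $G(x) = \max\{F(x),\ F(x^*) + \beta\|x-x^*\|^p\}$ with $\beta = \alpha^{p/q}\epsilon^{1-p/q}$. Using (A2) — the iterates stay within distance $D$ of $x^*$ — together with the pointwise inequality above and $F(x_k) > F(x^*) + \epsilon$, I get $F(x^*) + \beta\|x_k - x^*\|^p \le F(x^*) + \epsilon < F(x_k)$, so $G$ agrees with $F$ in value and subgradient at every $x_k$ with $k \le T$. Hence by (A1), \texttt{fom} applied to $G$ produces the same iterates, $G$ minimizes at $x^*$ as well, $G$ satisfies~\eqref{eq:holder-growth} with exponent $p$ and coefficient $\beta$, and so (A3) forces $T < K(x_0, \epsilon, \beta)$, which is the claim.

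One subtlety I want to flag: the statement of Theorem~\ref{thm:higher-order-rate-lifting} only assumes (A1) and (A3), not (A2). But (A2) is used implicitly here — the point is that since $F$ itself satisfies the $q$-growth bound, the iterates automatically stay bounded: indeed $F(x^*) + \alpha\|x_k - x^*\|^q \le F(x_k)$, so $\|x_k - x^*\| \le ((F(x_k) - F(x^*))/\alpha)^{1/q}$, and $F(x_k)$ is controlled (one would need to argue, e.g., that the methods in question are monotone in objective value or at least keep $F(x_k)$ bounded by $F(x_0)$, giving $D \le ((F(x_0)-F(x^*))/\alpha)^{1/q}$). So the real content is that $q$-growth supplies its own diameter bound, and then the construction of $G$ above is exactly the $p$-growth restriction of $F$ to that ball. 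The main obstacle is therefore not any single hard step but rather being careful that (i) the exponent arithmetic in $\beta = \alpha^{p/q}\epsilon^{1-p/q}$ is exactly what the splitting argument produces, and (ii) the role of the diameter $D$ is handled cleanly even though it no longer appears in the final bound — it gets absorbed because the growth coefficient $\beta$ was chosen to not depend on $D$ at all, only on $\alpha$, $\epsilon$, $p$, $q$.
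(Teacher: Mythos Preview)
Your overall strategy---define $G(x)=\max\{F(x),\,F(x^*)+\beta\|x-x^*\|^p\}$ with $\beta=\alpha^{p/q}\epsilon^{1-p/q}$, show $G$ coincides with $F$ at every iterate that is not yet $\epsilon$-optimal, and invoke (A3)---is exactly the paper's, and your algebra fixing $\beta$ via $c/\beta=(\beta/\alpha)^{p/(q-p)}$ with $c=\epsilon$ is correct (indeed your threshold $\|x-x^*\|^p=c/\beta$ is precisely the paper's $\|x-x^*\|=(\epsilon/\alpha)^{1/q}$). But the wrapping is muddled. The displayed $\min$ inequality is trivial as written and cannot be sharpened to $\min\{\alpha\|x-x^*\|^q,\,c\}\ge\beta\|x-x^*\|^p$: that fails near $x^*$ since $q>p$ makes $\alpha\|x-x^*\|^q$ vanish faster than $\beta\|x-x^*\|^p$. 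What your case split actually establishes is $\beta\|x-x^*\|^p\le\max\{\epsilon,\,\alpha\|x-x^*\|^q\}$, for \emph{all} $x$, with no diameter restriction. Your second paragraph is then incomplete: the chain $\beta\|x_k-x^*\|^p\le\epsilon<F(x_k)-F(x^*)$ covers only the small-distance branch; in the other branch you must use $\beta\|x_k-x^*\|^p<\alpha\|x_k-x^*\|^q\le F(x_k)-F(x^*)$, i.e.\ the $q$-growth of $F$ directly. That is exactly the paper's two-case argument.

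This also dissolves your (A2) worry: the theorem genuinely does not need it, and the paper's proof never introduces any diameter. The $q$-growth hypothesis is what handles the large-$\|x_k-x^*\|$ case, so $D$ never enters and $\beta$ depends only on $\alpha,\epsilon,p,q$. Your attempted workaround---deriving a diameter bound from $q$-growth plus monotonicity of $F(x_k)$---is both unnecessary and not valid in general (the subgradient method, for instance, is not monotone in objective value). Drop $D$ entirely and the proof is clean.
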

\begin{proof}
	Suppose that no iterate $x_k$ with $k\leq T$ is an $\epsilon$-minimizer of $F$. 
	Consider the auxiliary function $G(x) = \max\{F(x), F(x^*) + \alpha^{p/q}\epsilon^{1-p/q}\|x - x^*\|^p\}$. We begin by showing that $F(x^*) + \alpha^{p/q}\epsilon^{1-p/q}\|x_k - x^*\|^p < F(x_k)$ for all $k\leq T$.  If $\|x_k-x^*\| > (\epsilon/\alpha)^{1/q}$, then
	$$ F(x^*) + \alpha^{p/q}\epsilon^{1-p/q}\|x - x^*\|^p < F(x^*) + \alpha \|x_k-x^*\|^q \leq F(x_k).$$
	If $\|x_k-x^*\|\leq (\epsilon/\alpha)^{1/q}$, then
	$$ F(x^*) + \alpha^{p/q}\epsilon^{1-p/q}\|x - x^*\|^p \leq F(x^*) + \epsilon < F(x_k).$$
	Thus around all of the points $\{x_k\}_{k=0}^T$ considered by \texttt{fom}, $G(\cdot) = F(\cdot)$ and $\partial G(\cdot) = \partial F(\cdot)$.
	
	From this, (A1) ensures that applying \texttt{fom} to either $G$ or $F$ produces the same sequence of iterates up to iteration $T$. Since $F$ and $G$ both minimize at $x^*$, we know that no $x_k$ with $k\leq T$ is an $\epsilon$-minimizer of $G$ either. By definition, $G$ satisfies the H\"older growth condition~\eqref{eq:holder-growth} with exponent $p$ and coefficient $\alpha^{p/q}\epsilon^{1-p/q}$. Hence (A3) implies that $T < K(x_0,\epsilon,\alpha^{p/q}\epsilon^{1-p/q})$.
\end{proof}

This theorem relies on the same type of reasoning underlying our general lifting result. The given function $F$ cannot be discerned from the auxiliary function $G$ unless an $\epsilon$-accuracy iterate is found.
Applying this to lift sharp convergence rates to apply to functions satisfying quadratic growth simply requires substituting $\alpha$ with $\sqrt{\epsilon\alpha}$.

\paragraph{Extensions of our Lifting Theorems.} We make two remarks on straightforward extensions of our lifting theorems. First convergence rates can be derived by assuming growth away from the set of minimizers $X^*$ rather than assuming growth away from a unique minimizer $x^*$. Both of our lifting theorems still hold when~\eqref{eq:holder-growth} is replaced by
$$ F(x) \geq F(x^*) + \alpha\mathrm{dist}(x, X^*)^p.$$
Second notice that the convexity of $F$ is not used anywhere in the proof of either theorem. Hence if one can prove a convergence guarantee like (A3) for a nonconvex problem, then our theorems can still be applied to give more general convergence guarantees. 

\paragraph{Specialized Rates Following From General Rates.} Our two lifting theorems show that each entry in our previous convergence rate table implies the entries to its left. This prompts the question of whether the reverse implications to the right hold. If one is willing to modify the given first-order method, then the literature already provides a positive answer to this question through restarting schemes. Such schemes repeatedly run a given first-order method until some criteria is met and then restart the method at the current iterate. See~\cite{Nest1986} for the earliest proposal of a restarting method which relies on extensive knowledge of problem constants to determine when to restart and~\cite{RenegarGrimmer2018} for a recent restarting method which avoids assuming any knowledge about the problems structure.\\

In the remainder of this paper, we illustrate the usefulness of our lifting theorems as an analytic tool by applying them to the proximal point, subgradient, and bundle methods. We focus on the cases of $p=1$ and $p=2$ where H\"older growth becomes sharpness and quadratic growth, respectively. For the proximal point and subgradient methods, we include convergence proofs under sharpness and quadratic growth since they are particularly short.

	\section{Application to the Proximal Point Method}\label{sec:proximal}
First we use our lifting theorem to derive a general convergence rate for the proximal point method, defined by~\eqref{eq:proximal}. The standard convergence analysis of this method with constant stepsize $\rho_k=\rho>0$ shows that for any convex $F$ with minimizer $x^*$, an $\epsilon$-minimizer $x_k$ is found with
$$ k\leq O\left(\frac{\|x_0-x^*\|^2}{\rho\epsilon}\right).$$

To apply our machinery, we only need to verify that (A1)-(A3) hold. We begin by showing (A1) and (A2). Then we give two convergence proofs assuming H\"older growth holds with $p=2$ and with $p=1$. Using either of these to establish (A3) allows us to recover the standard convergence rate of the proximal point method in general (with an addition logarithmic term when $p=2$). 

\paragraph{Verifying (A1).} As previously noted, the proximal point method is equivalent to
$$x_{k+1} = x_{k} - \rho_kg_{k+1},\text{ where }g_{k+1}\in\partial F(x_{k+1}),$$
which is exactly the first-order optimality condition of the subproblem defining each iteration. Since we allow the computation of $x_{k+1}$ to depend on the function and subgradient values at $x_{k+1}$, the proximal point method satisfies (A1).

\paragraph{Verifying (A2).} Standard properties of the proximal operator give the following bound.
\begin{lemma}\label{lem:proximal-distance}
	For any minimizer $x^*$ and $k\geq0$, $\|x_k-x^*\| \leq \|x_0-x^*\|$. Hence (A2) holds with $D=\|x_0-x^*\|$.
\end{lemma}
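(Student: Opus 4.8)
The plan is to exploit the (firm) nonexpansiveness of the proximal operator together with the fact that any minimizer $x^*$ of $F$ is a fixed point of $\mathrm{prox}_{\rho, F}$, and then induct on $k$. Concretely, since $0 \in \partial F(x^*)$, the first-order optimality condition for the subproblem $\argmin\{F(\cdot) + \frac{1}{2\rho}\|\cdot - x^*\|^2\}$ is satisfied at $x^*$ itself, so $\mathrm{prox}_{\rho, F}(x^*) = x^*$ for every $\rho > 0$ (in particular for $\rho = \rho_k$). Hence it suffices to show the one-step inequality $\|x_{k+1} - x^*\| \le \|x_k - x^*\|$ and chain it down to $k = 0$.

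For the one-step bound I would work directly with the equivalent form from the verification of (A1), namely $x_{k+1} = x_k - \rho_k g_{k+1}$ with $g_{k+1} \in \partial F(x_{k+1})$. Writing $x_k - x^* = (x_{k+1} - x^*) + \rho_k g_{k+1}$ and expanding the square,
\[
\|x_k - x^*\|^2 = \|x_{k+1} - x^*\|^2 + 2\rho_k \langle g_{k+1},\, x_{k+1} - x^*\rangle + \rho_k^2 \|g_{k+1}\|^2.
\]
The subgradient inequality for the convex function $F$ at $x_{k+1}$ gives $\langle g_{k+1},\, x^* - x_{k+1}\rangle \le F(x^*) - F(x_{k+1}) \le 0$, so the cross term is nonnegative, and therefore $\|x_{k+1} - x^*\|^2 \le \|x_k - x^*\|^2$. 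Induction on $k$ then yields $\|x_k - x^*\| \le \|x_0 - x^*\|$ for all $k \ge 0$, which is exactly (A2) with $D = \|x_0 - x^*\|$.

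Honestly there is no serious obstacle here; the only point requiring mild care is making sure the argument uses only convexity of $F$ and the subgradient characterization of the prox step (both already available from the discussion preceding the lemma), rather than any smoothness or strong convexity. An alternative, equally short route is to cite firm nonexpansiveness of $\mathrm{prox}_{\rho_k, F}$ as a standard fact and combine it with the fixed-point observation; I would include the self-contained two-line computation above instead, since it keeps the paper elementary.
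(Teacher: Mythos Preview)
Your proof is correct and follows essentially the same approach as the paper: the paper simply cites nonexpansiveness of the proximal operator (from \cite{Parikh2014}) together with the fixed-point property of $x^*$, while you unpack that same nonexpansiveness via the subgradient characterization of the prox step. The alternative route you mention at the end is in fact exactly what the paper does.
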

\begin{proof}
	The proximal operator $\mathrm{prox}_{\rho, F}(\cdot)$ is nonexpansive~\cite{Parikh2014}. Then since any minimizer $x^*$ of $F$ is a fixed point of $\mathrm{prox}_{\rho, F}(\cdot)$, the distance from each iterate to $x^*$ must be nonincreasing.
\end{proof}

\paragraph{Verifying (A3) with $p=2$.} Assuming a quadratic growth lower bound allows us to guarantee the proximal point method converges linearly. Below we compute the function $K(x_0,\epsilon,\alpha)$ corresponding to this rate.
\begin{theorem}\label{thm:proximal-quadratic}
	Consider any convex $F$ satisfying~\eqref{eq:holder-growth} with $p=2$. Then for any $\epsilon>0$, the proximal point method with constant stepsize $\rho_k=\rho>0$ will find an $\epsilon$-minimizer $x_k$ with 
	$$ k\leq K(x_0,\epsilon,\alpha) = \frac{1}{\log\left(1 + \frac{\alpha\rho}{2}\right)}\log\left(\frac{F(x_0)-F(x^*)}{\epsilon}\right).$$
\end{theorem}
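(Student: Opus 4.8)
The plan is to track the decrease of the objective gap $F(x_k) - F(x^*)$ from one iteration to the next and show it contracts by a fixed multiplicative factor whenever $x_k$ is not yet an $\epsilon$-minimizer. First I would record the defining inequality of the proximal step: since $x_{k+1}$ minimizes $F(\cdot) + \frac{1}{2\rho}\|\cdot - x_k\|^2$, comparing its value at $x_{k+1}$ against its value at $x_k$ gives
$$F(x_{k+1}) + \frac{1}{2\rho}\|x_{k+1} - x_k\|^2 \leq F(x_k),$$
so in particular the function values are nonincreasing. To get a genuine contraction, I would instead compare against a point on the segment from $x_k$ toward $x^*$, or use the standard three-point inequality for the proximal operator together with convexity, to obtain a bound of the form
$$F(x_{k+1}) - F(x^*) + \frac{1}{2\rho}\|x_{k+1} - x^*\|^2 \leq \frac{1}{2\rho}\|x_k - x^*\|^2.$$

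The heart of the argument is then to convert the quadratic-growth hypothesis~\eqref{eq:holder-growth} with $p=2$, namely $\frac{1}{2\rho}\|x_{k+1}-x^*\|^2 \geq \frac{\alpha}{2\rho \cdot \alpha}\cdot\alpha\|x_{k+1}-x^*\|^2$... more directly: $\alpha\|x_{k+1}-x^*\|^2 \leq F(x_{k+1}) - F(x^*)$, so $\|x_{k+1}-x^*\|^2 \leq \frac{1}{\alpha}(F(x_{k+1}) - F(x^*))$. Substituting into the displayed inequality and using the same growth bound on the right-hand side (or rather using $\|x_k - x^*\|^2 \le \frac1\alpha (F(x_k)-F(x^*))$ is the wrong direction, so instead I keep $\|x_k-x^*\|^2$ and aim for a telescoping/recursive bound) yields
$$\left(1 + \frac{\alpha\rho}{2}\right)\bigl(F(x_{k+1}) - F(x^*)\bigr) \leq \text{something} \leq F(x_k) - F(x^*),$$
after the routine algebra of combining the two inequalities and bounding the cross terms. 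I would then iterate this contraction to get $F(x_k) - F(x^*) \leq \left(1 + \frac{\alpha\rho}{2}\right)^{-k}\bigl(F(x_0) - F(x^*)\bigr)$, and solve for the smallest $k$ making the right side at most $\epsilon$, which gives exactly $k \leq \frac{1}{\log(1 + \alpha\rho/2)}\log\!\left(\frac{F(x_0)-F(x^*)}{\epsilon}\right)$.

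The main obstacle I anticipate is pinning down the precise constant $\alpha\rho/2$ (as opposed to $\alpha\rho$ or $\alpha\rho/(1+\alpha\rho)$) in the contraction factor: this depends on exactly which combination of the proximal three-point inequality, convexity of $F$, and the growth bound one uses, and on whether one applies the growth bound at $x_k$, at $x_{k+1}$, or at an intermediate point. I would handle this by being careful to use the inequality $F(x_{k+1}) + \frac{1}{2\rho}\|x_{k+1}-x^*\|^2 - \frac{1}{2\rho}\|x_k - x^*\|^2 \le F(x^*)$ in the form that naturally produces $\|x_{k+1}-x^*\|^2$ on the favorable side, then invoke quadratic growth only on that term, so that the $\frac{\alpha\rho}{2}$ factor emerges from $\frac{1}{2\rho}\cdot\alpha$. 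Everything else — nonexpansiveness giving (A2), the reformulation giving (A1), and the final logarithm manipulation — is routine.
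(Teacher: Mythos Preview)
Your proposal has a genuine gap at the step you yourself flag as ``routine algebra.'' From the three-point inequality
\[
F(x_{k+1}) - F(x^*) + \tfrac{1}{2\rho}\|x_{k+1}-x^*\|^2 \leq \tfrac{1}{2\rho}\|x_k-x^*\|^2,
\]
quadratic growth only gives \emph{upper} bounds $\|x_j-x^*\|^2 \leq \alpha^{-1}(F(x_j)-F(x^*))$. Applying this on the right yields $F(x_{k+1})-F(x^*)\le \tfrac{1}{2\rho\alpha}(F(x_k)-F(x^*))$, which is a contraction only when $2\rho\alpha>1$; applying it on the left goes the wrong way. So the promised inequality $\bigl(1+\tfrac{\alpha\rho}{2}\bigr)(F(x_{k+1})-F(x^*))\le F(x_k)-F(x^*)$ does not drop out of the three-point estimate, and the difficulty is not merely in the constant.

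The paper stays with the very first inequality you wrote down and then discarded,
\[
F(x_{k+1}) - F(x^*) \leq F(x_k) - F(x^*) - \tfrac{1}{2\rho}\|x_{k+1}-x_k\|^2,
\]
and supplies the missing ingredient: the optimality condition $(x_k-x_{k+1})/\rho \in \partial F(x_{k+1})$ together with convexity gives
\[
\|x_{k+1}-x_k\|\,\|x_{k+1}-x^*\| \ge \rho\bigl(F(x_{k+1})-F(x^*)\bigr).
\]
Now quadratic growth is used in the \emph{correct} direction, bounding $\|x_{k+1}-x^*\|^2 \le \alpha^{-1}(F(x_{k+1})-F(x^*))$, which after squaring the displayed line yields $\|x_{k+1}-x_k\|^2 \ge \rho^2\alpha\,(F(x_{k+1})-F(x^*))$. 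Substituting this lower bound into the descent inequality produces exactly $\bigl(1+\tfrac{\rho\alpha}{2}\bigr)(F(x_{k+1})-F(x^*))\le F(x_k)-F(x^*)$, and the rest is as you say.
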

\begin{proof}
	The definition of the proximal subproblem ensures each $k\geq 0$ has
	$$ F(x_{k+1}) - F(x^*) \leq F(x_{k}) - F(x^*) - \frac{\|x_{k+1}-x_{k}\|^2}{2\rho}.$$
	The optimality condition of the proximal subproblem is $(x_{k+1}-x_{k})/\rho \in \partial F(x_{k+1})$. Hence
	$$ \|x_{k+1}-x_{k}|\|x_{k+1}-x^*\|/\rho \geq \langle (x_{k+1}-x_{k})/\rho, x_{k+1} - x^*\rangle \geq F(x_{k+1}) - F(x^*).$$
	Then supposing that $x_k\neq x^*$, it follows from the assumed quadratic growth bound that
	$$\|x_{k+1}-x_{k}\|^2 \geq \rho^2 (F(x_{k+1}) - F(x^*))^2/\|x_{k+1}-x^*\|^2 \geq \rho^2\alpha(F(x_{k+1}) - F(x^*)).$$
	Thus we have a geometric decrease in the objective gap at each iteration of
	\begin{equation*}
		F(x_{k+1}) - F(x^*) \leq \left(1 + \frac{\rho\alpha}{2}\right)^{-1}(F(x_{k}) - F(x^*)). \qedhere
	\end{equation*}
\end{proof}

Applying Theorem~\ref{thm:rate-lifting} with this rate of $K(x_0,\epsilon,\alpha) = O(\log((F(x_0)-F(x^*))/\epsilon)/\rho\alpha)$ gives a general rate of $\widetilde O(\|x_0-x^*\|^2/\rho\epsilon)$, matching the method's known convergence rate up to a logarithmic term\footnote{We use $\widetilde O(\cdot)$ to hide logarithmic terms.}.
\begin{corollary}\label{cor:proximal-rate2}
	Consider any convex $F$ that attains its minimum value. Then for any $\epsilon>0$, the proximal point method with constant stepsize $\rho_k=\rho>0$ will find an $\epsilon$-minimizer $x_k$ with 
	$$k \leq \frac{1}{\log\left(1 + \dfrac{\rho\epsilon}{2\|x_0-x^*\|^2}\right)}\log\left(\frac{F(x_0)-F(x^*)}{\epsilon}\right).$$
\end{corollary}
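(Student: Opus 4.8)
The plan is to apply Theorem~\ref{thm:rate-lifting} directly to the convergence guarantee established in Theorem~\ref{thm:proximal-quadratic}. We have already verified assumptions (A1) and (A2) for the proximal point method: (A1) holds because each iterate satisfies the first-order optimality condition $x_{k+1} = x_k - \rho_k g_{k+1}$ with $g_{k+1}\in\partial F(x_{k+1})$, and (A2) holds with $D = \|x_0-x^*\|$ by Lemma~\ref{lem:proximal-distance}. Assumption (A3) with $p=2$ is precisely the content of Theorem~\ref{thm:proximal-quadratic}, which supplies the function
$$ K(x_0,\epsilon,\alpha) = \frac{1}{\log\left(1 + \frac{\alpha\rho}{2}\right)}\log\left(\frac{F(x_0)-F(x^*)}{\epsilon}\right).$$

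The remaining step is bookkeeping: Theorem~\ref{thm:rate-lifting} asserts that \texttt{fom} finds an $\epsilon$-minimizer with $k \leq K(x_0,\epsilon,\epsilon/D^p)$. Here $p=2$ and $D=\|x_0-x^*\|$, so I would substitute $\alpha = \epsilon/\|x_0-x^*\|^2$ into the displayed formula for $K$. This yields directly
$$ k \leq \frac{1}{\log\left(1 + \dfrac{\rho\epsilon}{2\|x_0-x^*\|^2}\right)}\log\left(\frac{F(x_0)-F(x^*)}{\epsilon}\right),$$
which is exactly the claimed bound. No further estimation is needed for the corollary as stated; the $\widetilde O(\|x_0-x^*\|^2/\rho\epsilon)$ remark in the surrounding text then follows from the elementary inequality $\log(1+t) \geq t/(1+t)$ (or $\log(1+t)\sim t$ as $t\to 0$) applied to $t = \rho\epsilon/2\|x_0-x^*\|^2$, absorbing the $\log((F(x_0)-F(x^*))/\epsilon)$ factor into the $\widetilde O$ notation.

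There is essentially no obstacle here — the work was front-loaded into verifying (A1), (A2), and proving Theorem~\ref{thm:proximal-quadratic}. The only point requiring a moment's care is confirming that the hypotheses of Theorem~\ref{thm:rate-lifting} are genuinely met, in particular that the stepsize $\rho$ is a fixed input independent of any growth constant (it is, being a constant stepsize chosen in advance), so that the auxiliary-function argument in the proof of Theorem~\ref{thm:rate-lifting} applies without modification. Given that, the corollary is an immediate specialization.
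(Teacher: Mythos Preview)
Your proposal is correct and matches the paper's approach exactly: the corollary is stated as an immediate consequence of substituting $\alpha = \epsilon/D^2 = \epsilon/\|x_0-x^*\|^2$ into the bound $K(x_0,\epsilon,\alpha)$ from Theorem~\ref{thm:proximal-quadratic} via Theorem~\ref{thm:rate-lifting}, with (A1) and (A2) having been verified earlier by Lemma~\ref{lem:proximal-distance}. The paper gives no additional proof beyond this substitution.
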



\paragraph{Verifying (A3) with $p=1$.} Rather than relying on quadratic growth, assuming sharpness (H\"older growth with $p=1$) allows us to derive a finite termination bound on the number of iterations before an exact minimizer is found (see~\cite{Ferris1991} for a more general proof and discussion of this finite result). Below we compute the resulting function $K(x_0,\alpha,\epsilon)$ that satisfies (A3).
\begin{theorem}\label{thm:proximal-sharp}
	Consider any convex $F$ satisfying~\eqref{eq:holder-growth} with $p=1$. Then for any $\epsilon>0$, the proximal point method with constant stepsize $\rho_k=\rho>0$ will find an $\epsilon$-minimizer $x_k$ with 
	$$ k\leq K(x_0,\epsilon,\alpha) = 2\frac{F(x_0)-F(x^*)}{\rho\alpha^2}.$$
\end{theorem}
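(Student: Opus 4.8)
The plan is to follow the template of the proof of Theorem~\ref{thm:proximal-quadratic}, replacing the quadratic growth bound by sharpness at the one step where the growth assumption is invoked. Two facts about the proximal point method carry over unchanged: the descent inequality
$$F(x_{k+1}) - F(x^*) \leq F(x_k) - F(x^*) - \frac{\|x_{k+1}-x_k\|^2}{2\rho},$$
obtained by comparing the value of the proximal subproblem at $x_{k+1}$ to its value at $x_k$, and the bound
$$\frac{\|x_{k+1}-x_k\|\,\|x_{k+1}-x^*\|}{\rho} \geq F(x_{k+1}) - F(x^*),$$
which follows from the optimality condition $(x_{k+1}-x_k)/\rho \in \partial F(x_{k+1})$ together with the subgradient inequality at $x^*$ and Cauchy--Schwarz.

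The one new ingredient is that whenever $x_{k+1}\neq x^*$, sharpness gives $F(x_{k+1}) - F(x^*) \geq \alpha\|x_{k+1}-x^*\|$, so dividing the second displayed bound by $\|x_{k+1}-x^*\|>0$ produces the \emph{iteration-independent} lower bound $\|x_{k+1}-x_k\| \geq \rho\alpha$. This is exactly the place where sharpness does more than quadratic growth: the analogous manipulation in Theorem~\ref{thm:proximal-quadratic} left a residual factor of $F(x_{k+1})-F(x^*)$ and hence only geometric decay, whereas here it yields a fixed constant. Feeding $\|x_{k+1}-x_k\|^2\geq \rho^2\alpha^2$ into the descent inequality then gives the constant decrease
$$F(x_{k+1}) - F(x^*) \leq F(x_k) - F(x^*) - \frac{\rho\alpha^2}{2}$$
on every iteration that does not already land on $x^*$.

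I would conclude with a telescoping/counting argument: the objective gap is nonnegative, starts at $F(x_0)-F(x^*)$, and drops by at least $\rho\alpha^2/2$ each step until an iterate lands exactly on $x^*$; hence this occurs within $2(F(x_0)-F(x^*))/(\rho\alpha^2)$ iterations, and an exact minimizer is in particular an $\epsilon$-minimizer. This yields the stated $K(x_0,\epsilon,\alpha)$; note the bound is independent of $\epsilon$, reflecting that sharpness forces finite termination to an exact minimizer.

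I do not expect a genuine obstacle: the argument is a short variant of the already-established quadratic case. The only point requiring a moment's care is the degenerate situation where an iterate coincides with $x^*$, which must be excluded before dividing by $\|x_{k+1}-x^*\|$ (and, separately, the case $x_0 = x^*$ where $K=0$); in all such cases the method has already reached an exact minimizer, so the bound holds trivially.
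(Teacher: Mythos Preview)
Your proposal is correct and follows essentially the same route as the paper: the same descent inequality, the same use of the optimality condition $(x_{k+1}-x_k)/\rho\in\partial F(x_{k+1})$ to bound $\|x_{k+1}-x_k\|\|x_{k+1}-x^*\|/\rho \geq F(x_{k+1})-F(x^*)$, and then sharpness to turn this into the constant lower bound $\|x_{k+1}-x_k\|\geq \rho\alpha$, yielding the fixed decrement $\rho\alpha^2/2$ per step. If anything, your handling of the division-by-zero case is slightly more careful than the paper's (which writes ``$x_k\neq x^*$'' where the quantity actually needed nonzero is $\|x_{k+1}-x^*\|$).
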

\begin{proof}
	We prove this by showing an exact minimizer of $F$ is found by iteration $K(x_0,\epsilon,\alpha)$. The definition of the proximal subproblem solved at each iteration $k\geq 0$ ensures
	$$ F(x_{k+1}) - F(x^*) \leq F(x_{k}) - F(x^*) - \frac{\|x_{k+1}-x_{k}\|^2}{2\rho}.$$
	The optimality condition of the proximal subproblem is $(x_{k+1}-x_{k})/\rho \in \partial F(x_{k+1})$. Hence
	$$ \|x_{k+1}-x_{k}|\|x_{k+1}-x^*\|/\rho \geq \langle (x_{k+1}-x_{k})/\rho, x_{k+1} - x^*\rangle \geq F(x_{k+1}) - F(x^*).$$
	Then supposing that $x_k\neq x^*$, the assumed sharp growth bound implies
	$$\|x_{k+1}-x_{k}\| \geq \rho (F(x_{k+1}) - F(x^*))/\|x_{k+1}-x^*\| \geq \rho\alpha.$$
	Thus we have a constant decrease in objective each iteration of
	\begin{equation*}
		F(x_{k+1}) - F(x^*) \leq F(x_{k}) - F(x^*) - \frac{\rho\alpha^2}{2}. \qedhere
	\end{equation*}
\end{proof}

Applying each of our Lifting Theorems~\ref{thm:rate-lifting} and~\ref{thm:higher-order-rate-lifting} to this recovers the general convergence rate and quadratic growth rate of the proximal point method.
\begin{corollary}\label{cor:proximal-rate1}
	Consider any convex $F$ that attains its minimum value. Then for any $\epsilon>0$, the proximal point method with constant stepsize $\rho_k=\rho>0$ will find an $\epsilon$-minimizer $x_k$ with 
	$$k \leq \frac{2(F(x_0)-F(x^*))\|x_0-x^*\|^2}{\rho\epsilon^2}.$$
	Setting $\epsilon=(F(x_0)-F(x^*))/2$ above shows the objective gap halves by iteration
	$$\frac{8\|x_0-x^*\|^2}{\rho(F(x_0)-F(x^*))}.$$
	Thus for any $\epsilon>0$, an $\epsilon$-minimizer $x_k$ is found with
	\begin{align*}
	k&\leq \frac{8\|x_0-x^*\|^2}{\rho}\left(\frac{1}{F(x_0)-F(x^*)} + \frac{2}{F(x_0)-F(x^*)} + \frac{4}{F(x_0)-F(x^*)} + \dots + \frac{1}{\epsilon}\right)\\
	&\leq \frac{16\|x_0-x^*\|^2}{\rho\epsilon}.
	\end{align*}
\end{corollary}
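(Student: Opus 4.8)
The plan is to combine the two lifting theorems with the sharp-growth rate of Theorem~\ref{thm:proximal-sharp}, and then to exploit the fact that the resulting $\epsilon^{-2}$-type bound can be telescoped over a doubling sequence of accuracies to recover the sharp $\epsilon^{-1}$ dependence. First I would invoke Lemma~\ref{lem:proximal-distance} to fix $D = \|x_0 - x^*\|$, so that (A2) holds, and recall that (A1) was already verified for the proximal point method. Theorem~\ref{thm:proximal-sharp} supplies (A3) with $p=1$ and $K(x_0,\epsilon,\alpha) = 2(F(x_0)-F(x^*))/\rho\alpha^2$. Plugging this $K$ into Theorem~\ref{thm:rate-lifting}, which substitutes $\alpha \mapsto \epsilon/D^p = \epsilon/\|x_0-x^*\|$, immediately yields the first displayed bound: an $\epsilon$-minimizer is found with $k \le 2(F(x_0)-F(x^*))\|x_0-x^*\|^2/\rho\epsilon^2$.

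The remaining work is the telescoping argument to sharpen the $\epsilon^{-2}$ to $\epsilon^{-1}$. Specialize the bound just obtained to the target accuracy $\epsilon = (F(x_0)-F(x^*))/2$; this shows that the objective gap is halved after at most $8\|x_0-x^*\|^2/\rho(F(x_0)-F(x^*))$ iterations. Since the bound from the lifting theorem depends on the history only through $F(x_0)-F(x^*)$ and $\|x_0-x^*\|$ — and by Lemma~\ref{lem:proximal-distance} the distance to $x^*$ is nonincreasing — the same halving statement applies when restarted (conceptually, not algorithmically) from any later iterate: whenever the current gap is $\Delta$, it is halved within $8\|x_0-x^*\|^2/\rho\Delta$ further iterations. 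Summing this over the geometric sequence of gaps $F(x_0)-F(x^*)$, $(F(x_0)-F(x^*))/2$, $(F(x_0)-F(x^*))/4, \dots$ down to $\epsilon$, the per-stage counts form a geometric series $\frac{8\|x_0-x^*\|^2}{\rho}\bigl(\tfrac{1}{F(x_0)-F(x^*)} + \tfrac{2}{F(x_0)-F(x^*)} + \dots + \tfrac{1}{\epsilon}\bigr)$, whose dominant term is $\tfrac{1}{\epsilon}$ and whose sum is at most $16\|x_0-x^*\|^2/\rho\epsilon$.

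The one point requiring care — the main obstacle — is justifying that the halving bound may be reapplied from a later iterate despite the lifting theorems nominally being stated in terms of the fixed starting point $x_0$. This is legitimate because the constant $D$ controlling (A2) is a global bound on $\|x_k - x^*\|$ for all $k$ (indeed $D = \|x_0-x^*\|$ works for every tail of the sequence by nonexpansiveness), and (A1) is a per-iteration property; so the hypotheses (A1)–(A3) hold verbatim for the trajectory started at $x_j$ with the same $D$, and Theorem~\ref{thm:rate-lifting} applies to each stage with the current gap playing the role of $F(x_0)-F(x^*)$. Everything else is the routine geometric-series estimate already written in the statement, where the factor of $2$ comes from bounding $\sum_i 2^{-i} \le 2$.
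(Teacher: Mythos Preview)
Your proposal is correct and follows essentially the same route as the paper: apply Theorem~\ref{thm:rate-lifting} with $p=1$, $D=\|x_0-x^*\|$, and $K(x_0,\epsilon,\alpha)=2(F(x_0)-F(x^*))/\rho\alpha^2$ to get the $\epsilon^{-2}$ bound, specialize to $\epsilon=(F(x_0)-F(x^*))/2$ for the halving estimate, and telescope over a geometric sequence of target accuracies. Your extra paragraph justifying that the halving bound may be reapplied from any later iterate (via nonexpansiveness, so the same $D$ works for every tail) is a point the paper leaves implicit, and it is exactly the right thing to check; only Theorem~\ref{thm:rate-lifting} is needed here, not Theorem~\ref{thm:higher-order-rate-lifting}.
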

\begin{corollary}\label{cor:proximal-rate12}
	Consider any convex $F$ satisfying~\eqref{eq:holder-growth} with $p=2$. Then for any $\epsilon>0$, the proximal point method with constant stepsize $\rho_k=\rho>0$ will find an $\epsilon$-minimizer $x_k$ with 
	$$k \leq \frac{2(F(x_0)-F(x^*))}{\rho\alpha\epsilon}.$$
	Setting $\epsilon=(F(x_0)-F(x^*))/2$ above shows the objective gap halves by iteration
	$$\frac{4}{\rho\alpha}.$$
	Thus for any $\epsilon>0$, an $\epsilon$-minimizer $x_k$ is found with
	\begin{align*}
	k&\leq \frac{4}{\rho\alpha}\log\left(\frac{F(x_0)-F(x^*)}{\epsilon}\right).
	\end{align*}
\end{corollary}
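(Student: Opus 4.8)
The plan is to feed the sharp-growth rate of Theorem~\ref{thm:proximal-sharp} into the higher-order lifting theorem (Theorem~\ref{thm:higher-order-rate-lifting}) with $p=1$, $q=2$, and then to bootstrap the resulting $O(1/\epsilon)$ estimate into the claimed linear rate by the same halving argument used for Corollary~\ref{cor:proximal-rate1}.

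For the first displayed bound, recall that the proximal point method satisfies (A1) and, by Theorem~\ref{thm:proximal-sharp}, satisfies (A3) with $p=1$ and $K(x_0,\epsilon,\alpha) = 2(F(x_0)-F(x^*))/(\rho\alpha^2)$. Since $F$ is assumed to satisfy quadratic growth (\eqref{eq:holder-growth} with $q=2$) with coefficient $\alpha$, Theorem~\ref{thm:higher-order-rate-lifting} produces an $\epsilon$-minimizer within $K(x_0,\epsilon,\alpha^{1/2}\epsilon^{1/2})$ iterations. Substituting $\alpha^{1/2}\epsilon^{1/2}=\sqrt{\alpha\epsilon}$ into the formula for $K$ and using $(\sqrt{\alpha\epsilon})^2=\alpha\epsilon$ gives exactly $k \leq 2(F(x_0)-F(x^*))/(\rho\alpha\epsilon)$.

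For the halving statement, specialize this bound to $\epsilon = (F(x_0)-F(x^*))/2$: the factors of $F(x_0)-F(x^*)$ cancel and the bound collapses to $4/(\rho\alpha)$, so within $4/(\rho\alpha)$ iterations the objective gap is at least halved. The key point is that this per-phase cost carries no dependence on $x_0$ or on the current gap, so the statement applies verbatim starting from any iterate $x_j$ (and if some $x_j$ is already an exact minimizer we are done). Iterating, after $m$ phases, i.e.\ after at most $4m/(\rho\alpha)$ iterations, the gap is at most $(F(x_0)-F(x^*))/2^m$; choosing $m=\lceil\log_2((F(x_0)-F(x^*))/\epsilon)\rceil$ makes this at most $\epsilon$, yielding $k\leq \frac{4}{\rho\alpha}\log\left(\frac{F(x_0)-F(x^*)}{\epsilon}\right)$ (with the rounding and logarithm base treated as loosely as in the analogous estimate in Corollary~\ref{cor:proximal-rate1}).

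There is essentially no obstacle here: the argument is a direct substitution into Theorem~\ref{thm:higher-order-rate-lifting} followed by a geometric bootstrap. The only point deserving a moment's care is the observation, noted above, that the bound delivered by the lifting theorem after setting $\epsilon=(F(x_0)-F(x^*))/2$ is a \emph{constant} $4/(\rho\alpha)$ independent of the iterate and the gap, which is what lets us sum a fixed number of iterations per halving phase rather than a geometrically growing one.
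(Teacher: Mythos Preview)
Your proposal is correct and follows precisely the approach intended in the paper: apply Theorem~\ref{thm:higher-order-rate-lifting} with $p=1$, $q=2$ to the sharp rate of Theorem~\ref{thm:proximal-sharp}, substitute $\sqrt{\alpha\epsilon}$ for $\alpha$, and then run the halving bootstrap exactly as in Corollary~\ref{cor:proximal-rate1}. The paper leaves the argument implicit in the corollary statement itself, and you have simply written it out, including the justified observation that the per-phase cost $4/(\rho\alpha)$ is independent of the current iterate and gap.
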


	\section{Application to the Subgradient Method}\label{sec:subgradient}
Now we use our lifting theorem to derive a general convergence rate for the subgradient method given in~\eqref{eq:subgradient}. We consider using the Polyak stepsize which is defined by
$$ \rho_k = \frac{F(x_k)-F(x^*)}{\|g_k\|^2}.$$ 
We use this stepsize since it does not rely on any knowledge of a H\"older growth bound~\eqref{eq:holder-growth}. This does however mean that the method relies on the minimum objective value $F(x^*)$ being known in advance.

The standard analysis of this method requires a uniform bound on the norm of all of the subgradients used. To this end, we will assume the following constant exists
$$ L =\sup \|g_k\|,$$
which must exist if the objective function is uniformly Lipschitz continuous.
Then for any convex $F$ with minimizer $x^*$, the subgradient method will find an $\epsilon$-minimizer $x_k$ with
$$ k\leq O\left(\frac{L^2\|x_0-x^*\|^2}{\epsilon^2}\right).$$

Applying our machinery only requires that we verify (A1)-(A3) hold. First we establish (A1) and (A2) and then give two convergence proofs assuming H\"older growth holds with $p=2$ and with $p=1$, establishing (A3).

\paragraph{Verifying (A1).} Each iteration computes a single subgradient at the current iterate and uses the current objective value to compute the stepsize. Hence (A1) holds.

\paragraph{Verifying (A2).} Much like the proximal point method, the distance from each iterate of the subgradient method to a minimizer is nonincreasing when the Polyak stepsize is used.
\begin{lemma}\label{lem:subgradient-distance}
	For any minimizer $x^*$ and $k\geq0$, $\|x_k-x^*\| \leq \|x_0-x^*\|$. Hence (A2) holds with $D=\|x_0-x^*\|$.
\end{lemma}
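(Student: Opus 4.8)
The plan is to show that the distance to $x^*$ is nonincreasing at every iteration, i.e. $\|x_{k+1}-x^*\| \leq \|x_k-x^*\|$, and then conclude $\|x_k-x^*\|\leq\|x_0-x^*\|$ by induction on $k$. This is the standard one-step contraction argument for the Polyak stepsize, so the work is a short computation rather than anything structural.

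First I would expand the squared distance after a subgradient step. Since $x_{k+1} = x_k - \rho_k g_k$, we get $\|x_{k+1}-x^*\|^2 = \|x_k-x^*\|^2 - 2\rho_k\langle g_k, x_k-x^*\rangle + \rho_k^2\|g_k\|^2$. Next, because $g_k\in\partial F(x_k)$, the subgradient inequality gives $\langle g_k, x_k-x^*\rangle \geq F(x_k)-F(x^*) \geq 0$. Substituting the Polyak stepsize $\rho_k = (F(x_k)-F(x^*))/\|g_k\|^2$ into the expansion, the cross term is at most $-2(F(x_k)-F(x^*))^2/\|g_k\|^2$ and the final term equals $(F(x_k)-F(x^*))^2/\|g_k\|^2$, so the two combine to yield $\|x_{k+1}-x^*\|^2 \leq \|x_k-x^*\|^2 - (F(x_k)-F(x^*))^2/\|g_k\|^2 \leq \|x_k-x^*\|^2$. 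Induction then gives the claim, and $D=\|x_0-x^*\|$ verifies (A2).

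There is essentially no obstacle here; the only mild point to address is the degenerate case $g_k=0$, which forces $x_k$ to already minimize $F$ (so the method may be taken to have terminated, or $\rho_k$ set to $0$ by convention), in which case $x_{k+1}=x_k$ and the inequality is trivial. The same remark covers the case $F(x_k)=F(x^*)$, where the step vanishes. So the bound holds for all $k\geq 0$ as stated.
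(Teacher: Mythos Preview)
Your proof is correct and follows essentially the same route as the paper: expand $\|x_{k+1}-x^*\|^2$, bound the cross term via the subgradient inequality, substitute the Polyak stepsize, and obtain the one-step decrease $\|x_{k+1}-x^*\|^2 \leq \|x_k-x^*\|^2 - (F(x_k)-F(x^*))^2/\|g_k\|^2$. The paper additionally records the slightly weaker bound with $L^2$ in the denominator (since it reuses that inequality later), and does not explicitly address the $g_k=0$ case you cover, but otherwise the arguments coincide.
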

\begin{proof}
	The convergence of the subgradient method is governed by the following inequality
	\begin{align}
		\|x_{k+1} - x^*\|^2 &= \|x_{k}-x^*\|^2 -2\langle\rho_kg_k, x_{k}-x^*\rangle + \rho_k^2\|g_k\|^2\nonumber\\
		& \leq \|x_{k}-x^*\|^2 -2\rho_k (F(x_k)-F(x^*)) + \rho_k^2\|g_k\|^2\nonumber\\
		& = \|x_{k}-x^*\|^2 -\frac{(F(x_k)-F(x^*))^2}{\|g_k\|^2}\nonumber\\
		& \leq \|x_{k}-x^*\|^2 -\frac{(F(x_k)-F(x^*))^2}{L^2}, \label{eq:subgradient-recurrence}
	\end{align}
	where the first inequality uses the convexity of $F$ and the second uses our assumed subgradient bound.
	Thus the distance from each iterate to $x^*$ is nonincreasing.
\end{proof}

\paragraph{Verifying (A3) with $p=2$.} Below we give a simple proof that the subgradient method under the quadratic growth condition finds an $\epsilon$-minimizer within $\widetilde O(L^2/\alpha\epsilon)$ iterations. We note that a more refined analysis (using an approach like~\cite{Lacoste2012}) would likely remove the logarithmic term.
\begin{theorem}\label{thm:subgradient-quadratic}
	Consider any convex $F$ satisfying~\eqref{eq:holder-growth} with $p=2$. Then for any $\epsilon>0$, the subgradient method with the Polyak stepsize will find an $\epsilon$-minimizer $x_k$ with 
	$$ k\leq K(x_0,\epsilon,\alpha) = \frac{2L^2}{\alpha\epsilon}\log\left(\frac{L\|x_0-x^*\|}{\epsilon}\right).$$
\end{theorem}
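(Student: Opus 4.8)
The plan is to combine the distance recurrence~\eqref{eq:subgradient-recurrence} from Lemma~\ref{lem:subgradient-distance} with quadratic growth to show the squared distance $\|x_k-x^*\|^2$ contracts geometrically as long as no $\epsilon$-minimizer has been found, and then to convert a small distance back into a small objective gap using the subgradient norm bound $L$. I would argue by contradiction: suppose no iterate $x_k$ with $k\le T$ is an $\epsilon$-minimizer, where $T = \frac{2L^2}{\alpha\epsilon}\log\!\left(\frac{L\|x_0-x^*\|}{\epsilon}\right)$, so that $F(x_k)-F(x^*) > \epsilon$ for all $k \le T$. (If some $x_k = x^*$ the statement is immediate, so assume $x_k\neq x^*$ throughout; likewise if $L\|x_0-x^*\|\le\epsilon$ then $x_0$ is already an $\epsilon$-minimizer.)

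The first step is the geometric contraction. Starting from~\eqref{eq:subgradient-recurrence}, apply the quadratic growth bound $F(x_k)-F(x^*)\ge\alpha\|x_k-x^*\|^2$ to one factor of the squared gap, giving $(F(x_k)-F(x^*))^2 \ge \alpha(F(x_k)-F(x^*))\|x_k-x^*\|^2$. Substituting yields
$$\|x_{k+1}-x^*\|^2 \le \left(1 - \frac{\alpha(F(x_k)-F(x^*))}{L^2}\right)\|x_k-x^*\|^2,$$
and since $F(x_k)-F(x^*) > \epsilon$ for every $k\le T$, the right-hand side is at most $\left(1-\frac{\alpha\epsilon}{L^2}\right)\|x_k-x^*\|^2$. (If $\alpha\epsilon/L^2\ge 1$ this already forces $x_{k+1}=x^*$; otherwise the factor lies in $(0,1)$.) Iterating from $k=0$ to $k=T$ gives $\|x_T-x^*\|^2 \le \left(1-\frac{\alpha\epsilon}{L^2}\right)^{T}\|x_0-x^*\|^2$.

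The second step converts this back to an objective gap. Since $g_T\in\partial F(x_T)$, convexity and Cauchy--Schwarz give $F(x_T)-F(x^*) \le \langle g_T, x_T-x^*\rangle \le \|g_T\|\,\|x_T-x^*\| \le L\|x_T-x^*\|$. Combining with the contraction bound, $F(x_T)-F(x^*) \le L\|x_0-x^*\|\left(1-\frac{\alpha\epsilon}{L^2}\right)^{T/2}$. Using $1-t\le e^{-t}$ so that $\left(1-\frac{\alpha\epsilon}{L^2}\right)^{T/2}\le\exp\!\left(-\frac{T\alpha\epsilon}{2L^2}\right)$, the choice of $T$ makes this at most $\epsilon$, contradicting $F(x_T)-F(x^*) > \epsilon$.

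The only real subtlety is the two-directional use of the geometry: quadratic growth is applied in the ``wrong'' direction for the contraction step (to lower bound $F(x_k)-F(x^*)$ by a multiple of $\|x_k-x^*\|^2$), whereas turning the resulting small distance back into a small function value requires the upper bound $F(x_k)-F(x^*)\le L\|x_k-x^*\|$, which comes from the subgradient norm bound rather than from growth. Everything else is routine contraction-and-logarithm bookkeeping together with the trivial edge cases $x_k=x^*$ and $\alpha\epsilon\ge L^2$.
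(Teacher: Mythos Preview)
Your proof is correct and follows essentially the same route as the paper's: both split one factor of $(F(x_k)-F(x^*))^2$ in~\eqref{eq:subgradient-recurrence} using quadratic growth and the other using $F(x_k)-F(x^*)>\epsilon$ to obtain the contraction $\|x_{k+1}-x^*\|^2\le(1-\alpha\epsilon/L^2)\|x_k-x^*\|^2$, and then close the argument via the Lipschitz-type bound $F(x_T)-F(x^*)\le L\|x_T-x^*\|$. The only cosmetic difference is that the paper applies the $>\epsilon$ bound first and the growth bound second, and phrases the conclusion as ``hence $T$ is at most the claimed quantity'' rather than as a contradiction for a fixed $T$.
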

\begin{proof}
	Suppose after $T$ iterations, no $x_k$ with $k\leq T$ is an $\epsilon$-minimizer. Then~\eqref{eq:subgradient-recurrence} implies
	\begin{align*}
	\|x_{k+1} - x^*\|^2 &\leq \|x_{k} - x^*\|^2 - \frac{\epsilon(F(x_k)-F(x^*))}{L^2}\\
	&\leq \|x_{k} - x^*\|^2 - \frac{\alpha\epsilon}{L^2}\|x_{k}-x^*\|^2
	\end{align*}
	where the first inequality uses that $x_k$ is not an $\epsilon$-minimizer and the second uses the assumed quadratic growth bound.
	Thus $\|x_{k}-x^*\|$ is decreasing geometrically. However, the assumed subgradient norm bound ensures
	$$L\|x_T-x^*\| \geq \langle g_k, x^*-x_k\rangle \geq F(x_k)-F(x^*)\geq \epsilon.$$
	Thus
	\begin{equation*}
		T \leq \frac{2L^2}{\alpha\epsilon}\log\left(\frac{L\|x_0-x^*\|}{\epsilon}\right). \qedhere
	\end{equation*}
\end{proof}

Applying Theorem~\ref{thm:rate-lifting} recovers the method's general rate (up to a logarithmic term).
\begin{corollary}\label{cor:subgradient-rate2}
	Consider any convex $F$ that attains its minimum value. Then for any $\epsilon>0$, the subgradient method with the Polyak stepsize will find an $\epsilon$-minimizer $x_k$ with 
	$$k \leq \frac{2L^2\|x_0-x^*\|^2}{\epsilon^2}\log\left(\frac{L\|x_0-x^*\|}{\epsilon}\right).$$
\end{corollary}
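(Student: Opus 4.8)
The plan is to obtain this corollary as a direct instance of Theorem~\ref{thm:rate-lifting}, taking $\mathtt{fom}$ to be the subgradient method with the Polyak stepsize. The work is therefore just to confirm that this method meets assumptions (A1)--(A3) with the right constants, all of which has essentially been done already in this section. Assumption (A1) holds because each $x_{k+1}$ is built from a single subgradient $g_k\in\partial F(x_k)$ together with the objective value $F(x_k)$ used in the stepsize, so it depends only on $F$ and $\partial F$ at the earlier iterates. Assumption (A2) holds with $D=\|x_0-x^*\|$ by Lemma~\ref{lem:subgradient-distance}. Assumption (A3) holds with $p=2$ by Theorem~\ref{thm:subgradient-quadratic}, which supplies $K(x_0,\epsilon,\alpha)=\frac{2L^2}{\alpha\epsilon}\log\left(\frac{L\|x_0-x^*\|}{\epsilon}\right)$; the only dependence on the growth coefficient $\alpha$ is through the explicit $1/\alpha$ factor, which is exactly what the lifting theorem will exploit.

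With (A1)--(A3) verified, Theorem~\ref{thm:rate-lifting} asserts that for any $\epsilon>0$ an $\epsilon$-minimizer $x_k$ is found with $k\leq K(x_0,\epsilon,\epsilon/D^p)$. Since $p=2$ and $D=\|x_0-x^*\|$, this is $K(x_0,\epsilon,\epsilon/\|x_0-x^*\|^2)$, and substituting $\alpha=\epsilon/\|x_0-x^*\|^2$ into the formula for $K$ turns the $1/(\alpha\epsilon)$ factor into $\|x_0-x^*\|^2/\epsilon^2$, yielding
\[
k\leq \frac{2L^2\|x_0-x^*\|^2}{\epsilon^2}\log\left(\frac{L\|x_0-x^*\|}{\epsilon}\right),
\]
which is the claimed bound. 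That is the entire argument.

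There is no genuine obstacle here; the corollary is a mechanical consequence of the meta-theorem applied to the already-established quadratic-growth rate. The points that warrant a line of care are: (i) checking the hypotheses of Theorem~\ref{thm:rate-lifting} hold literally, in particular that the Polyak stepsize depends only on $F(x_k)$, $F(x^*)$, and $\|g_k\|$ and hence the inputs to \texttt{fom} are independent of the growth bound~\eqref{eq:holder-growth}; (ii) noting that the constant $L=\sup\|g_k\|$ appearing in $K$ is unchanged when the lifting theorem replaces $F$ by the auxiliary function $G$, since $G$ produces the same iterates and $\partial G=\partial F$ around them; and (iii) tracking the exponent $p=2$ so that $D^p=\|x_0-x^*\|^2$. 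I would also remark that the same rate (again up to the logarithmic factor) could instead be recovered by starting from a sharp-growth rate and applying Theorem~\ref{thm:rate-lifting} with $p=1$, but the quadratic-growth route above is the shortest.
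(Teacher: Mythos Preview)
Your proposal is correct and matches the paper's approach exactly: the paper simply states that Corollary~\ref{cor:subgradient-rate2} follows by applying Theorem~\ref{thm:rate-lifting} to the quadratic-growth rate of Theorem~\ref{thm:subgradient-quadratic}, with (A1)--(A3) already verified in this section and $D=\|x_0-x^*\|$, $p=2$. Your additional remarks about the constant $L$ being preserved under the auxiliary function are sound but go slightly beyond what the paper spells out.
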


\paragraph{Verifying (A3) with $p=1$.} Rather than relying on quadratic growth, assuming sharpness (H\"older growth with $p=1$) allows us to derive a linear convergence guarantee. Below we compute the resulting function $K(x_0,\alpha,\epsilon)$ that satisfies (A3).
\begin{theorem}\label{thm:subgradient-sharp}
	Consider any convex $F$ satisfying~\eqref{eq:holder-growth} with $p=1$. Then for any $\epsilon>0$, the subgradient method with the Polyak stepsize will find an $\epsilon$-minimizer $x_k$ with 
	$$ k\leq K(x_0,\epsilon,\alpha) = \frac{2L^2}{\alpha^2}\log\left(\frac{L\|x_0-x^*\|}{\epsilon}\right).$$
\end{theorem}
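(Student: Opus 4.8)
The plan is to mirror the structure of the quadratic-growth proof (Theorem~\ref{thm:subgradient-quadratic}), but exploit the stronger sharp growth bound $F(x) \geq F(x^*) + \alpha\|x-x^*\|$ to get a genuinely geometric decrease in the \emph{squared distance} $\|x_k - x^*\|^2$ at every iteration, not just on the iterations where $x_k$ fails to be an $\epsilon$-minimizer. First I would start from the fundamental subgradient recurrence~\eqref{eq:subgradient-recurrence},
$$\|x_{k+1} - x^*\|^2 \leq \|x_k - x^*\|^2 - \frac{(F(x_k)-F(x^*))^2}{L^2},$$
and substitute the sharp growth bound $F(x_k) - F(x^*) \geq \alpha\|x_k - x^*\|$ into the numerator. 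This yields
$$\|x_{k+1} - x^*\|^2 \leq \left(1 - \frac{\alpha^2}{L^2}\right)\|x_k - x^*\|^2,$$
so the squared distances contract by a fixed factor $1 - \alpha^2/L^2$ each step, giving $\|x_k - x^*\|^2 \leq (1 - \alpha^2/L^2)^k \|x_0 - x^*\|^2$.

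Next I would convert this distance contraction into an objective-gap bound. As in the earlier proofs, the subgradient norm bound together with convexity gives $F(x_k) - F(x^*) \leq \langle g_k, x_k - x^*\rangle \leq L\|x_k - x^*\|$. So if $x_k$ is not an $\epsilon$-minimizer, then $\epsilon < L\|x_k - x^*\|$, i.e. $\|x_k - x^*\| > \epsilon/L$, hence $\|x_k-x^*\|^2 > \epsilon^2/L^2$. Combining with the contraction bound, as long as no $\epsilon$-minimizer has been found by iteration $T$ we need
$$\frac{\epsilon^2}{L^2} < \left(1 - \frac{\alpha^2}{L^2}\right)^{T}\|x_0 - x^*\|^2.$$
Taking logarithms and using $\log(1 - \alpha^2/L^2)^{-1} \geq \alpha^2/L^2$ (valid since $\alpha \leq L$, as $\alpha\|x_k-x^*\| \leq F(x_k)-F(x^*) \leq L\|x_k-x^*\|$) rearranges to
$$T \leq \frac{1}{\log(1-\alpha^2/L^2)^{-1}}\log\!\left(\frac{L^2\|x_0-x^*\|^2}{\epsilon^2}\right) \leq \frac{2L^2}{\alpha^2}\log\!\left(\frac{L\|x_0-x^*\|}{\epsilon}\right),$$
which is exactly the claimed $K(x_0,\epsilon,\alpha)$.

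The calculation is almost entirely routine; the only point requiring a little care is the passage from the per-step contraction factor to the logarithmic iteration count, specifically justifying the inequality $\log(1 - \alpha^2/L^2)^{-1} \geq \alpha^2/L^2$ and the factor-of-$2$ bookkeeping in moving the square outside the logarithm. I would also make sure the degenerate case $x_k = x^*$ is handled (there an exact minimizer is already found, so the bound holds trivially), and note that $\alpha \leq L$ is forced by the two displayed inequalities on $F(x_k) - F(x^*)$, so the contraction factor $1 - \alpha^2/L^2$ indeed lies in $[0,1)$. With (A1), (A2) from Lemma~\ref{lem:subgradient-distance}, and this theorem supplying (A3) with $p = 1$, the general and quadratic-growth rates would then follow immediately from Theorems~\ref{thm:rate-lifting} and~\ref{thm:higher-order-rate-lifting}, just as in the proximal point section.
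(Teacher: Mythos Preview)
Your proposal is correct and follows essentially the same route as the paper: apply the sharp growth bound inside the recurrence~\eqref{eq:subgradient-recurrence} to obtain the contraction $\|x_{k+1}-x^*\|^2 \leq (1-\alpha^2/L^2)\|x_k-x^*\|^2$, then use $L\|x_T-x^*\|\geq \epsilon$ together with the standard logarithm inequality to extract the iteration bound. The paper's proof is simply a terser version of exactly this argument, omitting the algebraic details you spell out.
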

\begin{proof}
	Suppose after $T$ iterations, no $x_k$ with $k\leq T$ is an $\epsilon$-minimizer. Together~\eqref{eq:subgradient-recurrence} and the sharp growth bound yield the following geometric decrease
	$$ \|x_{k+1}-x^*\|^2 \leq \|x_k-x^*\|^2 -\frac{\alpha^2}{L^2}\|x_k-x^*\|^2.$$
	Recall the assumed subgradient norm bound ensures $L\|x_T-x^*\| \geq \epsilon$. Thus
	\begin{equation*}
	T \leq \frac{2L^2}{\alpha^2}\log\left(\frac{L\|x_0-x^*\|}{\epsilon}\right). \qedhere
	\end{equation*}
\end{proof}

Applying our Lifting Theorems~\ref{thm:rate-lifting} and~\ref{thm:higher-order-rate-lifting} with this sharp convergence rate recovers the method's general rate as well as its rate under quadratic growth (up to logarithmic terms).
\begin{corollary}\label{cor:subgradient-rate1}
	Consider any convex $F$ that attains its minimum value. Then for any $\epsilon>0$, the subgradient method with the Polyak stepsize will find an $\epsilon$-minimizer $x_k$ with 
	$$k \leq \frac{2L^2\|x_0-x^*\|^2}{\epsilon^2}\log\left(\frac{L\|x_0-x^*\|}{\epsilon}\right).$$
\end{corollary}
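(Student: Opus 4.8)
The plan is to invoke Theorem~\ref{thm:rate-lifting} using the sharp-growth convergence bound just established in Theorem~\ref{thm:subgradient-sharp}, exactly as was done for the proximal point method. The three hypotheses (A1)--(A3) have already been checked for the subgradient method with the Polyak stepsize: (A1) holds because each step uses only the function value and a single subgradient at the current iterate; (A2) holds with $D=\|x_0-x^*\|$ by Lemma~\ref{lem:subgradient-distance}; and (A3) holds with $p=1$ and
$$K(x_0,\epsilon,\alpha) = \frac{2L^2}{\alpha^2}\log\left(\frac{L\|x_0-x^*\|}{\epsilon}\right)$$
by Theorem~\ref{thm:subgradient-sharp}.

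First I would apply Theorem~\ref{thm:rate-lifting}, which says that \texttt{fom} finds an $\epsilon$-minimizer with $k \leq K(x_0,\epsilon,\epsilon/D^p)$. Here $p=1$ and $D = \|x_0-x^*\|$, so the effective growth coefficient to substitute is $\alpha \mapsto \epsilon/\|x_0-x^*\|$. Then I would simply plug this into the expression for $K$: replacing $\alpha^2$ by $\epsilon^2/\|x_0-x^*\|^2$ in the denominator turns $2L^2/\alpha^2$ into $2L^2\|x_0-x^*\|^2/\epsilon^2$, while the logarithmic factor $\log(L\|x_0-x^*\|/\epsilon)$ is unchanged since it does not involve $\alpha$. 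This yields exactly
$$k \leq \frac{2L^2\|x_0-x^*\|^2}{\epsilon^2}\log\left(\frac{L\|x_0-x^*\|}{\epsilon}\right),$$
which is the claimed bound (and matches the $O(L^2\|x_0-x^*\|^2/\epsilon^2)$ general rate up to the logarithmic term).

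There is essentially no obstacle here: the entire content is the bookkeeping substitution $\alpha \mapsto \epsilon/D^p$ into the already-proven specialized rate, and the only thing to be careful about is that $K(x_0,\epsilon,\alpha)$ is monotone/well-behaved in $\alpha$ so the substitution is legitimate, which it is since $K$ depends on $\alpha$ only through the decreasing factor $1/\alpha^2$. I would note in passing that, as promised in the introduction, the same sharp-growth rate also lifts via Theorem~\ref{thm:higher-order-rate-lifting} (with $q=2$, substituting $\alpha \mapsto \sqrt{\epsilon\alpha}$) to recover the $\widetilde{O}(L^2/\alpha\epsilon)$ quadratic-growth rate of Theorem~\ref{thm:subgradient-quadratic} up to constants, so a single proof under sharpness suffices for all three regimes.
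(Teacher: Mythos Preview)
Your proposal is correct and matches the paper's approach exactly: the corollary is obtained by applying Theorem~\ref{thm:rate-lifting} with $p=1$, $D=\|x_0-x^*\|$, and the sharp-growth rate $K(x_0,\epsilon,\alpha)=\frac{2L^2}{\alpha^2}\log(L\|x_0-x^*\|/\epsilon)$ from Theorem~\ref{thm:subgradient-sharp}, substituting $\alpha\mapsto\epsilon/\|x_0-x^*\|$. Your closing remark about lifting to quadratic growth via Theorem~\ref{thm:higher-order-rate-lifting} is likewise exactly how the paper obtains Corollary~\ref{cor:subgradient-rate12}.
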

\begin{corollary}\label{cor:subgradient-rate12}
	Consider any convex $F$ satisfying~\eqref{eq:holder-growth} with $p=2$. Then for any $\epsilon>0$, the subgradient method with the Polyak stepsize will find an $\epsilon$-minimizer $x_k$ with 
	$$k \leq \frac{2L^2}{\alpha\epsilon}\log\left(\frac{L\|x_0-x^*\|}{\epsilon}\right).$$
\end{corollary}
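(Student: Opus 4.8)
The plan is to obtain this bound for free by feeding the sharp-growth rate of Theorem~\ref{thm:subgradient-sharp} into the higher-order lifting result, Theorem~\ref{thm:higher-order-rate-lifting}, with the exponent pair $p=1$ and $q=2$. So the first step is simply to collect the hypotheses needed by Theorem~\ref{thm:higher-order-rate-lifting}: that the subgradient method with Polyak stepsize satisfies (A1) and (A3). Assumption (A1) was already verified for this method earlier in the section (each iteration uses only $F$ and a subgradient at the current point, plus the known optimal value to set the stepsize). Assumption (A3) with $p=1$ is exactly the content of Theorem~\ref{thm:subgradient-sharp}, which supplies the function $K(x_0,\epsilon,\alpha)=\frac{2L^2}{\alpha^2}\log\!\left(\frac{L\|x_0-x^*\|}{\epsilon}\right)$. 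It is worth emphasizing in the writeup why the Polyak stepsize matters here: its specification does not reference the growth coefficient, so the ``input independence'' needed to invoke the lifting theorems genuinely holds.

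The second step is the substitution. Since the given $F$ satisfies quadratic growth, i.e.\ \eqref{eq:holder-growth} with $q=2>1=p$ and coefficient $\alpha>0$, Theorem~\ref{thm:higher-order-rate-lifting} guarantees an $\epsilon$-minimizer $x_k$ with $k\leq K\!\left(x_0,\epsilon,\alpha^{p/q}\epsilon^{1-p/q}\right)=K\!\left(x_0,\epsilon,\sqrt{\alpha\epsilon}\right)$. Plugging the argument $\beta=\sqrt{\alpha\epsilon}$ into the sharp-growth bound gives $\beta^2=\alpha\epsilon$, hence
\[
k\;\leq\;\frac{2L^2}{\alpha\epsilon}\log\!\left(\frac{L\|x_0-x^*\|}{\epsilon}\right),
\]
which is precisely the claimed estimate.

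There is essentially no analytic obstacle remaining: all of the real work lives in Theorem~\ref{thm:subgradient-sharp} and in the lifting machinery, so the ``hard part'' is only the bookkeeping of checking that the hypotheses of Theorem~\ref{thm:higher-order-rate-lifting} are met and that the algebraic substitution $\alpha\mapsto\sqrt{\alpha\epsilon}$ collapses the $1/\beta^2$ factor to $1/(\alpha\epsilon)$ as claimed. If one wanted to avoid even this, the same bound could alternatively be re-derived directly from \eqref{eq:subgradient-recurrence} together with the quadratic growth inequality (as in the proof of Theorem~\ref{thm:subgradient-quadratic}), but the point of the corollary is that the lifting theorem makes a fresh argument unnecessary.
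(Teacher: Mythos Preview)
Your proposal is correct and matches the paper's approach exactly: the paper states immediately before this corollary that applying Theorem~\ref{thm:higher-order-rate-lifting} to the sharp rate of Theorem~\ref{thm:subgradient-sharp} yields the quadratic-growth rate, and your substitution $\alpha\mapsto\sqrt{\alpha\epsilon}$ (so that $\alpha^2\mapsto\alpha\epsilon$) is precisely the computation the paper intends.
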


	\section{Application to the Bundle Method}\label{sec:bundle}
Lastly we consider applying our lifting theorems to bundle methods. These methods produce a sequence $\{x_k\}_{k=0}^\infty$ with non-increasing objective value by approximating $F$ with a cutting plane model $\widetilde F^k$ constructed from subgradients taken at a set of points $\{z_j\}_{j=0}^k$. 
We consider two variations of the bundle method (stated in Algorithms~\ref{alg:multiple-cuts} and~\ref{alg:aggregation}) differing in how the approximation $\widetilde F^k$ is constructed.

Bundle methods were first developed and proposed independently in~\cite{Lemarechal1975} and~\cite{Wolfe1975}. Computationally simpler bundle methods which aggregate cuts in the model $\widetilde F^k$ were analyzed in~\cite{Kiwiel1983, Kiwiel1985}.
The central result in the convergence theory of bundle methods is that (for convex $F$ that attain their minimum value somewhere) the sequences $\{z_k\}$ and $\{x_k\}$ both converge to a minimizer $x^*$ of $F$. That is, the bundle method when run with no stopping criteria has
\begin{equation}\label{eq:limiting-convergence}
\lim_{k\rightarrow\infty} x_k = \lim_{k\rightarrow\infty} z_k = x^*\in\argmin F.
\end{equation}
See~\cite[Thm. 4.9]{Kiwiel1985}, \cite[Thm. XV.3.2.4]{Hiriart1993}, or~\cite[Thm. 7.16]{Ruszczynski2006} for proofs of different variations of this result.

A convergence rate theory for bundle methods exists as well, but is much weaker and less studied than those of the proximal point method and the subgradient method. In 2000, Kiwiel~\cite{Kiwiel2000} gave the first convergence rate for this method, showing that an $\epsilon$-minimizer $x_k$ is found with
$$k\leq O\left(\frac{\|x_0-x^*\|^4}{\epsilon^3}\right).$$
More recently, Du and Ruszczy\'nski~\cite{Du2017} gave the first analysis of bundle methods when applied to problems satisfying the quadratic growth bound. In this case, an $\epsilon$-minimizer is found within $\widetilde O(1/\alpha^2\epsilon)$ iterations. Hence this problem provides another interesting setting to apply our lifting theorem. In doing so, we find that the recent rate of Du and Ruszczy\'nski implies the rate of Kiwiel (up to logarithmic terms).

Despite having weaker convergence rate guarantees than either the proximal point or subgradient methods, bundle methods have persisted as a method of choice for nonsmooth convex optimization. In practice, bundle methods have proven to be efficient methods for solving many nonsmooth problems (see~\cite{Lemarechal2001,Sagastizabal2012,Sagastizabal2005} for further discussion). Extensions to apply to nonconvex problems have been considered in~\cite{Apkarian2009, Warren2010, Kiwiel1985-nonconvex, Mifflin1982} and an extension to problems where only an inexact first-order oracle was recently given by~\cite{Hare2016}.

Stronger convergence rates have been established for related level bundle methods~\cite{Lemarechal1995}, which share many core elements with bundle methods. Further variations of level bundle methods were studied in~\cite{Kiwiel1995} and~\cite{Lan2015}. The results of Lan~\cite{Lan2015} are particularly impressive as their proposed method has optimal convergence rates for both smooth and nonsmooth problems while requiring very little input.

\subsection{Defining the Bundle Method}
We utilize the same notation as Du and Ruszczy\'nski~\cite{Du2017} to describe the bundle method. The bundle method solves a sequence of proximal subproblems based on piecewise linear lower bounds of $F$ given by a collection (or bundle) of its subgradients. Let $\widetilde F^k(x)$ denote such an approximation of the objective function given by either a collection of cuts at the previous iterates or convex combinations of those cuts. Then iteration $k$ of the bundle method computes the solution $z_{k+1}$ of the proximal subproblem
\begin{equation}\label{eq:subproblem}
\min_{x\in\RR^n} \widetilde F^k(x) + \frac{\rho}{2}\|x - x_k\|^2
\end{equation}
where $x_k$ is the current best approximation to the solution and $\rho>0$ is a user chosen parameter. If the decrease in value of $F$ from $x_k$ to $z_{k+1}$ is at least $\beta$ fraction of the decrease expected of $F(x_k)-\widetilde F^k(z_{k+1})$, then the bundle method sets $x_{k+1} = z_{k+1}$ (called a descent step). Otherwise the method sets $x_{k+1} = x_k$ (called a null step). Regardless of which type of step is done, a subgradient $g_{k+1}\in\partial F(z_{k+1})$ is computed to update the approximation $\widetilde F^{k+1}$.

Two approaches to constructing the approximation $\widetilde F^k$ are formalized below. Both the convergence results of Du and Ruszczy\'nski as well as the results proven herein apply equally to each of these bundle methods.

\subsubsection{Bundle Method with Multiple Cuts}\label{subsec:multiple}
The bundle method with multiple cuts maintains a cutting plane approximation given by the subgradients taken at a subset of the iterations $J_k\subseteq \{1\dots k\}$, which is defined by
$$ \widetilde F^{k}(x) = \max_{j\in J_k}\left\{F(z_j) + \langle g_j, x - z_j\rangle\right\},$$
where each $g_j\in \partial F(z_j)$ is a subgradient of $F$ at $z_j$. Then this version of the method proceeds as defined by Algorithm~\ref{alg:multiple-cuts}, which establishes how the subset $J_k$ is chosen.
\begin{algorithm} 
	\caption{Bundle Method with Multiple Cuts} \label{alg:multiple-cuts}
	\begin{algorithmic}
		\STATE {\bf Input:} $F\colon\RR^n\rightarrow \RR$, $x_0\in\RR^n$, $\rho>0,\ \beta\in(0,1),\ \epsilon_{stop}\geq0$
		\STATE $J_0=\{0\}$, $z_0=x_0$ and select $g_0\in\partial F(z_0)$
		\FOR{$k=0, 1, 2, \dots$}
		\STATE Compute the solution $z_{k+1}$ of subproblem~\eqref{eq:subproblem}
		\STATE {\bf if} $F(x_k)-\widetilde F^{k}(z_{k+1}) \leq \epsilon_{stop}$ {\bf then} stop and output $x_k$ {\bf end if} \hfill {\it Stopping Criteria}
		\STATE {\bf if} $F(z_{k+1}) \leq F(x_k) - \beta\left(F(x_k) - \widetilde F^k(z_{k+1})\right)$ {\bf then} $x_{k+1} = z_{k+1}$ \hfill {\it Descent Step}
		\STATE {\bf else} $x_{k+1} = x_{k}$ {\bf end if} \hfill {\it Null Step}
		\STATE Select $g_{k+1} \in \partial F(z_{k+1})$ and a set $J_{k+1}$ so that $$ J_k\cup\{k+1\} \supseteq J_{k+1} \subseteq \{j\in J_k \mid F(z_j) + \langle g_j, z_{k+1}-z_j\rangle = \widetilde F^k(z_{k+1})\}$$
		\ENDFOR
	\end{algorithmic}
\end{algorithm}

\subsubsection{Bundle Method with Cut Aggregation}\label{subsec:aggregation}
The bundle method with cut aggregation only uses two cutting planes in its approximation of $F$. It uses the lower bound given by the most recent subgradient $F(z_k) + \langle g_k, x-z_k\rangle$ and a convex combination of the lower bounds given by the previous subgradients $\bar{F}^k(x)$. Then the approximation is defined by
$$ \widetilde F^{k}(x) = \max\{F(z_k) + \langle g_k, x-z_k\rangle, \bar{F}^k(x)\}.$$
Notice that subproblem~\eqref{eq:subproblem} now corresponds to minimizing the maximum of two quadratics, which can easily be done in closed form.
This version of the method proceeds as defined by Algorithm~\ref{alg:aggregation}, which establishes exactly how the convex combination $\bar{F}^k$ is chosen.
\begin{algorithm} 
	\caption{Bundle Method with Cut Aggregation} \label{alg:aggregation}
	\begin{algorithmic}
		\STATE {\bf Input:}  $F\colon\RR^n\rightarrow \RR$, $x_0\in\RR^n$, $\rho>0,\ \beta\in(0,1),\ \epsilon_{stop}\geq0$
		\STATE $\bar{F}^0(\cdot)=-\infty$, $z_0=x_0$ and select $g_0\in\partial F(z_0)$
		\FOR{$k=0, 1, 2,\dots$}
		\STATE Compute the solution $z_{k+1}$ of subproblem~\eqref{eq:subproblem}
		\STATE {\bf if} $F(x_{k})-\widetilde F^{k}(z_{k+1}) \leq \epsilon_{stop}$ {\bf then} stop and output $x_k$ {\bf end if} \hfill {\it Stopping Criteria}
		\STATE {\bf if} $F(z_{k+1}) \leq F(x_k) - \beta\left(F(x_k) - \widetilde F^k(z_{k+1})\right)$ {\bf then} $x_{k+1} = z_{k+1}$ \hfill {\it Descent Step}
		\STATE {\bf else} $x_{k+1} = x_{k}$ {\bf end if} \hfill {\it Null Step}
		\STATE Select $g_{k+1} \in \partial F(z_{k+1})$ and define $$ \bar{F}^{k+1}(x) = \theta_k \bar{F}^k(x) + (1-\theta_k)\left[F(z_k) + \langle g_k,x - z_k\rangle\right]$$
		where $\theta_k\in[0,1]$ is such that the gradient of $\bar{F}^{k+1}(x)$ is equal to the subgradient of $\widetilde F^k(\cdot)$ at $z_{k+1}$ that satisfies the optimality conditions for subproblem~\eqref{eq:subproblem}
		\ENDFOR
	\end{algorithmic}
\end{algorithm}

\subsection{Applying our Lifting Theorem}
To derive a general convergence rate for the bundle method from the rate of~\cite{Du2017} assuming quadratic growth, we only need to verify our three assumptions hold.

\paragraph{Verifying (A1)} The sequence of iterates where subgradients are taken by the bundle method is $\{z_k\}^\infty_{k=0}$ rather than the descent sequence $\{x_k\}^\infty_{k=0}\subseteq \{z_k\}^\infty_{k=0}$. Each iteration only relies the values of $F$ and $\partial F$ at the previous iterates $\{z_j\}_{j=0}^{k}$ to solve the subproblem~\eqref{eq:subproblem} and only uses the objective values at $x_k$ and $z_{k+1}$ to determine descent. Hence (A1) holds.

\paragraph{Verifying (A2)} Notice that the limiting convergence guarantee~\eqref{eq:limiting-convergence} ensures that the sequence of iterates has distance to their limit point $x^*$ is bounded by some constant $D>0$. Since this sequence is bounded, the following constant must exist
$$ L=\sup \|g_k\|.$$ 
From this, we compute the following distance bound, showing the distance to $x^*$ does not increase much above $\|x_0-x^*\|$.
\begin{lemma}\label{lem:bundle-distance}
	For any $k\geq 0$, $\|z_k-x^*\|^2 \leq O\left(\|x_0 - x^*\|^2 + L^2\right)$. Hence (A2) holds with $D^2=O\left(\|x_0 - x^*\|^2 + L^2\right)$.
\end{lemma}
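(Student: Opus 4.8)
The plan is to track how the squared distance $\|z_{k+1} - x^*\|^2$ relates to $\|x_k - x^*\|^2$, exploiting the structure of the proximal subproblem~\eqref{eq:subproblem} together with the fact that $\widetilde F^k$ is a global lower bound on $F$. First I would write down the first-order optimality condition for the subproblem defining $z_{k+1}$: there is a subgradient $\widetilde g_{k+1} \in \partial \widetilde F^k(z_{k+1})$ with $\widetilde g_{k+1} + \rho(z_{k+1} - x_k) = 0$, i.e.\ $z_{k+1} = x_k - \frac{1}{\rho}\widetilde g_{k+1}$. Expanding $\|z_{k+1} - x^*\|^2 = \|x_k - x^*\|^2 - \frac{2}{\rho}\langle \widetilde g_{k+1}, x_k - x^*\rangle + \frac{1}{\rho^2}\|\widetilde g_{k+1}\|^2$ and then rewriting $\langle \widetilde g_{k+1}, x_k - x^*\rangle = \langle \widetilde g_{k+1}, x_k - z_{k+1}\rangle + \langle \widetilde g_{k+1}, z_{k+1} - x^*\rangle$, I can use the subgradient inequality for $\widetilde F^k$ at $z_{k+1}$ to bound $\langle \widetilde g_{k+1}, z_{k+1} - x^*\rangle \ge \widetilde F^k(z_{k+1}) - \widetilde F^k(x^*) \ge \widetilde F^k(z_{k+1}) - F(x^*)$, since $\widetilde F^k \le F$ everywhere. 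The term $\langle \widetilde g_{k+1}, x_k - z_{k+1}\rangle = \frac{1}{\rho}\|\widetilde g_{k+1}\|^2$ combines with the $\frac{1}{\rho^2}\|\widetilde g_{k+1}\|^2$ term.

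The second step is to handle the two cases of the algorithm. On a \emph{null step}, $x_{k+1} = x_k$, so the distance is literally unchanged; the only growth in distance happens on descent steps, where $x_{k+1} = z_{k+1}$. On a descent step I would combine the expansion above with the descent test $F(z_{k+1}) \le F(x_k) - \beta(F(x_k) - \widetilde F^k(z_{k+1}))$, which controls $F(x_k) - \widetilde F^k(z_{k+1})$ in terms of the actual objective decrease $F(x_k) - F(z_{k+1})$. Since $\{F(x_k)\}$ is nonincreasing and bounded below by $F(x^*)$, the total objective decrease over all descent steps telescopes and is at most $F(x_0) - F(x^*)$, a constant. This should let me write $\|z_{k+1} - x^*\|^2 \le \|x_k - x^*\|^2 + (\text{something summable}) + \frac{c}{\rho^2}\|\widetilde g_{k+1}\|^2$. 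Finally, since $\widetilde g_{k+1}$ lies in the convex hull of subgradients $g_j \in \partial F(z_j)$ (for the multiple-cut version it is a convex combination of the active cuts' slopes; for the aggregation version this is built into the update), we have $\|\widetilde g_{k+1}\| \le L$, so the accumulated error terms over all iterations are $O(\|x_0 - x^*\|^2/\rho \cdot (F(x_0)-F(x^*))) $ plus $O(L^2/\rho^2)$ — absorbing $\rho$ (a fixed constant) and the bounded quantity $F(x_0)-F(x^*)$ into the $O(\cdot)$, this gives $\|z_k - x^*\|^2 \le O(\|x_0 - x^*\|^2 + L^2)$.

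The main obstacle I anticipate is getting the cross term $-\frac{2}{\rho}\langle \widetilde g_{k+1}, z_{k+1} - x^*\rangle \ge -\frac{2}{\rho}(F(x^*) - \widetilde F^k(z_{k+1})) = \frac{2}{\rho}(\widetilde F^k(z_{k+1}) - F(x^*))$ to actually help rather than hurt: when $\widetilde F^k(z_{k+1}) < F(x^*)$ this term is negative and \emph{increases} the distance, and one must show that $F(x^*) - \widetilde F^k(z_{k+1})$, summed over iterations (or summed over descent steps, weighted appropriately), stays bounded. The resolution is that $F(x^*) - \widetilde F^k(z_{k+1}) \le F(x_k) - \widetilde F^k(z_{k+1})$ (using $F(x_k) \ge F(x^*)$), and on descent steps $F(x_k) - \widetilde F^k(z_{k+1}) \le \frac{1}{\beta}(F(x_k) - F(z_{k+1}))$ telescopes; null steps contribute nothing to the distance. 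One must also confirm the model's predicted decrease $F(x_k) - \widetilde F^k(z_{k+1})$ is nonnegative (it is, since $\widetilde F^k(x_k) \le F(x_k)$ and $z_{k+1}$ minimizes $\widetilde F^k + \frac{\rho}{2}\|\cdot - x_k\|^2$, so $\widetilde F^k(z_{k+1}) \le \widetilde F^k(x_k) \le F(x_k)$). Care is also needed to phrase everything so it applies uniformly to both Algorithm~\ref{alg:multiple-cuts} and Algorithm~\ref{alg:aggregation}; the only facts used are $\widetilde F^k \le F$ globally, $\widetilde g_{k+1}$ is a convex combination of past subgradients so $\|\widetilde g_{k+1}\|\le L$, and the descent test — all of which hold in both variants.
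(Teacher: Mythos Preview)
Your core ingredients are exactly those of the paper: (i) the optimality condition gives $\widetilde g_{k+1}=-\rho(z_{k+1}-x_k)$ as a convex combination of past subgradients, hence $\|z_{k+1}-x_k\|\le L/\rho$; (ii) on descent steps the test $F(x_k)-\widetilde F^k(z_{k+1})\le \tfrac{1}{\beta}(F(x_k)-F(x_{k+1}))$ telescopes to yield $\|x_k-x^*\|^2\le \|x_0-x^*\|^2+\tfrac{C}{\rho\beta}(F(x_0)-F(x^*))$. The paper simply cites this second bound from \cite[(7.64)]{Ruszczynski2006} rather than re-deriving it, and then combines (i) and (ii) by the triangle inequality $\|z_{k+1}-x^*\|^2\le 2\|z_{k+1}-x_k\|^2+2\|x_k-x^*\|^2$, finishing with $F(x_0)-F(x^*)\le L\|x_0-x^*\|$ and $2ab\le a^2+b^2$.

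Where your write-up wobbles is in bounding $\|z_{k+1}-x^*\|$ on \emph{null} steps. Your expansion yields $\|z_{k+1}-x^*\|^2\le \|x_k-x^*\|^2-\tfrac{1}{\rho^2}\|\widetilde g_{k+1}\|^2+\tfrac{2}{\rho}(F(x_k)-\widetilde F^k(z_{k+1}))$, and on a null step the model gap $F(x_k)-\widetilde F^k(z_{k+1})$ is \emph{not} controlled by the descent test (this is precisely what the ensuing null steps are working to reduce), so it is not ``summable'' over null iterations. The clean fix is the paper's: use your expansion only to telescope the descent-step sequence $\{x_k\}$, and then bound every $z_{k+1}$---regardless of step type---via the triangle inequality using $\|z_{k+1}-x_k\|\le L/\rho$. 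Also note that the $\|\widetilde g_{k+1}\|^2$ term in your expansion carries a \emph{negative} sign, so it cannot serve as the ``$+\tfrac{c}{\rho^2}\|\widetilde g_{k+1}\|^2$'' buffer you describe; that $L^2/\rho^2$ contribution really has to enter through the triangle-inequality step. Finally, to land on the stated form $O(\|x_0-x^*\|^2+L^2)$ you cannot merely absorb $F(x_0)-F(x^*)$ into the constant---you need $F(x_0)-F(x^*)\le L\|x_0-x^*\|$ followed by Young's inequality, as the paper does.
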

\begin{proof}
	Notice that the optimality condition of the subproblem~\eqref{eq:subproblem} ensures that $-\rho(z_{k+1} - x_k)$ is a convex combination of the subgradients used to construct $\widetilde F^k$. Then the $L$-Lipschitz continuity of $\widetilde F^k$ (which follows from our uniform subgradient norm bound) implies each iteration $k$ has $ \rho\|z_{k+1} - x_k\| \leq L.$
	It follows from~\cite[(7.64)]{Ruszczynski2006} that
	$$ \|x_k-x^*\|^2 \leq \|x_0 - x^*\|^2 + \frac{2(1-\beta)}{\rho\beta}\left[F(x_0) - F(x^*)\right].$$
	Using the triangle inequality to combine these two inequalities yields
	\begin{align*}
	\|z_{k+1}-x^*\|^2 &\leq 2\|z_{k+1} - x_k\|^2 + 2\|x_k-x^*\|^2\\
	&\leq 2L^2/\rho^2 + 2\|x_0 - x^*\|^2 + \frac{4(1-\beta)}{\rho\beta}\left[F(x_0) - F(x^*)\right]\\
	&\leq 2L^2/\rho^2 + 2\|x_0 - x^*\|^2 + \frac{4(1-\beta)}{\rho\beta}L\|x_0-x^*\|\\
	&\leq 2\left(1+\frac{1-\beta}{\beta}\right)\left(\|x_0 - x^*\|^2 + \frac{L^2}{\rho^2}\right).
	\end{align*}
	where the last inequality uses that $2ab \leq a^2 +b^2$ for all $a,b\in\RR$.
\end{proof}

\paragraph{Verifying (A3) with $p=2$} Noting that the optimality condition of the subproblem~\eqref{eq:subproblem} ensures that $-\rho(z_{k+1} - x_k)$ is a convex combination of the subgradients used to construct $\widetilde F^k$, there must exist some constant $M>0$ such that each null step has
$$\|g_{k+1} - \rho(z_{k+1}-x_k)\|^2\leq\rho M.$$
Assuming a quadratic growth bound exists, Du and Ruszczy\'nski derive the following rate\footnote{We remark that there is a minor issue in the analysis of Du and Ruszczy\'nski. Lemma 3.4 of~\cite{Du2017} is missing the dependence on $\rho$ in the right-hand side of the inequality (see~\cite[Lem 7.12]{Ruszczynski2006} for the correct inequality). Including this dependence only minorly effects the constants in the derived rate of convergence: Lemma 4.1 of~\cite{Du2017} should then have $\bar{\alpha} = \min\{1, \alpha/\rho\}$ instead of $\min\{1, \alpha\}$. All of the subsequent results in the paper hold without modification (besides this change in the definition of the constant $\bar{\alpha}$).}.

\begin{theorem}[Du and Ruszczy\'nski~\cite{Du2017}]\label{thm:strong-full}
	Consider any convex $F$ satisfying~\eqref{eq:holder-growth} with $p=2$, and let $\bar{\alpha} = \min\{1, \alpha/\rho\}$ and $\eta_0 = \widetilde F^0(z_1) + \frac{\rho}{2}\|z_0-x_1\|^2$.
	Then the bundle method with stopping criteria $\epsilon_{stop}>0$ will terminate by iteration
	\begin{align*}
	\frac{2M}{(1-\beta)^2 \epsilon_{stop}}\ln\left(\frac{F(x_0) - \eta_0}{\epsilon_{stop}}\right) + \frac{\ln\left(\frac{F(x_0) - F(x^*)}{\beta\epsilon_{stop}}\right)}{\ln(1 - \bar{\alpha}\beta)}
	\left[\frac{2M}{(1-\beta)^2 \epsilon_{stop}}\ln\left(\frac{2\bar{\alpha}^2\beta^2}{9}\right) - 2\right] + 2
	\end{align*}
	and the last iterate $x_k$ will be an $\epsilon_{stop}/\bar{\alpha}$-minimizer.
\end{theorem}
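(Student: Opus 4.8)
The plan is to treat this as a black-box application of the quoted convergence result of Du and Ruszczyński (or equivalently of Ruszczyński's textbook); the statement labeled \texttt{thm:strong-full} is not something we prove here but rather the specialized rate (A3) that we will later feed into our lifting machinery. So the "proof" is a matter of stating the precise form of the bound that appears in \cite{Du2017} under the quadratic growth assumption, tracking the constant $\bar\alpha$ through the corrected version of their Lemma 4.1 (per the footnote), and packaging the iteration count and the accuracy guarantee of the terminating iterate in the notation used here.

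Concretely, I would proceed as follows. First, recall the two structural facts the bundle analysis rests on: the existence of the constant $M$ controlling $\|g_{k+1}-\rho(z_{k+1}-x_k)\|^2\le\rho M$ at null steps (already introduced above), and the descent/null-step dichotomy built into Algorithms~\ref{alg:multiple-cuts} and~\ref{alg:aggregation}. Second, recall the two-part counting argument from \cite{Du2017}: between any two consecutive descent steps, the number of null steps is bounded (this uses the $M$ bound and the aggregation property, and is where the $\frac{2M}{(1-\beta)^2\epsilon_{stop}}\ln(\cdot)$ terms come from), while the number of descent steps before the optimality gap falls below the stopping threshold is bounded geometrically with ratio $1-\bar\alpha\beta$ — this is precisely where quadratic growth~\eqref{eq:holder-growth} with $p=2$ enters, via the corrected $\bar\alpha=\min\{1,\alpha/\rho\}$. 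Third, combine the per-descent-epoch null-step bound with the descent-step bound by multiplying the two, which gives the product term $\frac{\ln(\cdots)}{\ln(1-\bar\alpha\beta)}\big[\tfrac{2M}{(1-\beta)^2\epsilon_{stop}}\ln(\tfrac{2\bar\alpha^2\beta^2}{9})-2\big]$ plus the additive $+2$ and the leading $\frac{2M}{(1-\beta)^2\epsilon_{stop}}\ln(\tfrac{F(x_0)-\eta_0}{\epsilon_{stop}})$ accounting for the first epoch (before any descent step), with $\eta_0=\widetilde F^0(z_1)+\tfrac\rho2\|z_0-x_1\|^2$. Finally, argue that at termination the stopping test $F(x_k)-\widetilde F^k(z_{k+1})\le\epsilon_{stop}$ together with quadratic growth forces $F(x_k)-F(x^*)\le\epsilon_{stop}/\bar\alpha$: since $\widetilde F^k$ is a lower model of $F$, the gap $F(x_k)-\widetilde F^k(z_{k+1})$ upper bounds the true optimality gap up to the factor $\bar\alpha$ coming from the growth inequality, exactly as in \cite[Lem 4.1]{Du2017} with the corrected constant.

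The main obstacle — and it is a bookkeeping obstacle rather than a conceptual one — is faithfully propagating the $\rho$-dependence flagged in the footnote. Du and Ruszczyński's Lemma 3.4 drops a factor of $\rho$; restoring it (matching \cite[Lem 7.12]{Ruszczynski2006}) changes $\bar\alpha$ from $\min\{1,\alpha\}$ to $\min\{1,\alpha/\rho\}$, and one must check that this substitution flows unchanged through their Lemma 4.1 and the subsequent telescoping that yields the displayed count. I would verify that the only place $\bar\alpha$ enters the final bound is through the geometric ratio $1-\bar\alpha\beta$, the logarithmic argument $\tfrac{2\bar\alpha^2\beta^2}{9}$, and the terminal accuracy $\epsilon_{stop}/\bar\alpha$, and that no other constant in the chain secretly depends on the old (incorrect) $\bar\alpha$.

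Since this theorem is explicitly attributed to \cite{Du2017}, in the paper itself I would not reproduce the full argument but merely cite it with the footnoted correction; the sketch above is what one would check to be confident the cited statement is used correctly. For our purposes the operative content is that \texttt{thm:strong-full} certifies assumption (A3) with $p=2$, coefficient $\alpha$, and
$$K(x_0,\epsilon,\alpha)=\widetilde O\!\left(\frac{M}{\epsilon\,\bar\alpha^2\beta^2}\right)=\widetilde O\!\left(\frac{1}{\alpha^2\epsilon}\right)$$
after setting $\epsilon_{stop}=\bar\alpha\epsilon$ so that the last iterate is a genuine $\epsilon$-minimizer; this is the form that Lifting Theorem~\ref{thm:rate-lifting} will consume in the next subsection.
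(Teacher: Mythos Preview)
Your proposal is correct and matches the paper's treatment exactly: the paper does not prove this theorem at all but simply states it as a citation of Du and Ruszczy\'nski~\cite{Du2017}, with the footnoted correction to $\bar\alpha$, and then immediately moves to simplifying the bound and setting $\epsilon_{stop}=\bar\alpha\epsilon$ to extract $K(x_0,\epsilon,\alpha)$ for use in Theorem~\ref{thm:rate-lifting}. Your sketch of the null-step/descent-step counting argument and the propagation of the corrected $\bar\alpha$ is more detail than the paper provides, but the operative conclusion---treat this as a black-box (A3) input---is identical.
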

We can simplify this bound slightly by using the following three inequalities. First, a simple calculation shows that $\eta_0 = F(x_0) - \|g_0\|^2/2\rho \geq F(x_0) - L^2/2\rho$. Second, since $\|g_{k+1} - \rho(z_{k+1}-x_k)\|^2\leq 4L^2$, the constant $M$ can be replaced by $4L^2/\rho$. Lastly, note that $\ln(1-\bar{\alpha}\beta) \leq -\bar{\alpha}\beta$.
Together these yield a bound of
\begin{align*}
\frac{8L^2}{\rho(1-\beta)^2\epsilon_{stop}}\ln\left(\frac{L^2}{2\rho\epsilon_{stop}}\right) + \ln\left(\frac{F(x_0) - F(x^*)}{\beta\epsilon_{stop}}\right)
\left[\frac{8L^2}{\rho(1-\beta)^2\beta \bar{\alpha}\epsilon_{stop}}\ln\left(\frac{9}{2\bar{\alpha}^2\beta^2}\right) + \frac{2}{\bar{\alpha}\beta}\right] + 2.
\end{align*}
A convergence rate for the bundle method with no stopping criteria can be extracted from this. For any $\epsilon>0$, an $\epsilon$-minimizer must be found by iteration
\begin{align*}
\frac{8L^2}{\rho(1-\beta)^2\bar{\alpha}\epsilon}\ln\left(\frac{L^2}{2\rho\bar{\alpha}\epsilon}\right) + \ln\left(\frac{F(x_0) - F(x^*)}{\beta\bar{\alpha}\epsilon}\right)
\left[\frac{8L^2}{\rho(1-\beta)^2 \beta\bar{\alpha}^2\epsilon}\ln\left(\frac{9}{2\bar{\alpha}^2\beta^2}\right) + \frac{2}{\beta\bar{\alpha}}\right] + 2
\end{align*}
as the bundle method with stopping criteria $\epsilon_{stop} = \bar{\alpha}\epsilon$ would have terminated.

Applying Theorem~\ref{thm:rate-lifting} with the above quantity as $K(x_0,\epsilon,\alpha)$ gives the following convergence guarantee for the bundle method when applied to a general convex function.
\begin{corollary}\label{cor:bundle-rate}
	Consider any convex $F$ that attains its minimum value. Then for any $\epsilon>0$, the bundle method with no stopping criteria will find an $\epsilon$-minimizer by iteration
	\begin{align*}
	\frac{8L^2}{\rho(1-\beta)^2\bar{\alpha}\epsilon}\ln\left(\frac{L^2}{2\rho\bar{\alpha}\epsilon}\right) + \ln\left(\frac{F(x_0) - F(x^*)}{\beta\bar{\alpha}\epsilon}\right)
	\left[\frac{8L^2}{\rho(1-\beta)^2 \beta\bar{\alpha}^2\epsilon}\ln\left(\frac{9}{2\bar{\alpha}^2\beta^2}\right) + \frac{2}{\beta\bar{\alpha}}\right] + 2
	\end{align*}
	where $\bar{\alpha} = \min\{1, \epsilon/\rho D^2\}$.
\end{corollary}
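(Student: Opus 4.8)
The plan is to invoke Theorem~\ref{thm:rate-lifting} with $\mathtt{fom}$ taken to be the bundle method (either Algorithm~\ref{alg:multiple-cuts} or~\ref{alg:aggregation}) run with no stopping criteria, so that all that remains is to check that assumptions (A1)--(A3) hold and then to read off the resulting bound by substituting $\alpha = \epsilon/D^2$.

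Assumptions (A1) and (A2) are already in hand from the preceding discussion: (A1) holds because each iteration uses only $F$ and $\partial F$ at the previously visited points $\{z_j\}_{j=0}^{k}$, together with the values of $F$ at $x_k$ and $z_{k+1}$ to decide between descent and null steps; and (A2) holds by Lemma~\ref{lem:bundle-distance}, which gives $D^2 = O(\|x_0-x^*\|^2 + L^2)$, using that the limiting convergence~\eqref{eq:limiting-convergence} makes the iterate sequence bounded so that $L = \sup\|g_k\|$ is finite. For (A3) with $p=2$ I would take $K(x_0,\epsilon,\alpha)$ to be precisely the iteration count displayed immediately before the corollary, with $\bar\alpha = \min\{1,\alpha/\rho\}$. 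This is a legitimate choice of $K$: it is the termination bound of Theorem~\ref{thm:strong-full} (after the simplifications $\eta_0 \geq F(x_0) - L^2/2\rho$, $M \leq 4L^2/\rho$, and $\ln(1-\bar\alpha\beta) \leq -\bar\alpha\beta$) specialized to the stopping tolerance $\epsilon_{stop} = \bar\alpha\epsilon$, for which the output at termination is an $\epsilon_{stop}/\bar\alpha = \epsilon$-minimizer; since the bundle method with no stopping criteria produces exactly the same iterates up to that point, it has likewise found an $\epsilon$-minimizer by that iteration. One should also note that $K$ references the growth constant only through $\bar\alpha$, and the algorithm inputs $\rho,\beta$ do not depend on $\alpha$, as required by the model of (A1)--(A3).

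With (A1)--(A3) established, Theorem~\ref{thm:rate-lifting} immediately yields that the bundle method finds an $\epsilon$-minimizer by iteration $K(x_0,\epsilon,\epsilon/D^p) = K(x_0,\epsilon,\epsilon/D^2)$. Substituting $\alpha = \epsilon/D^2$ turns $\bar\alpha = \min\{1,\alpha/\rho\}$ into $\bar\alpha = \min\{1,\epsilon/\rho D^2\}$, which is exactly the constant in the statement, and every other factor in the expression is unchanged because it depends on $\alpha$ only through $\bar\alpha$. This produces the claimed bound verbatim.

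The step requiring the most care is the verification of (A3): one must be sure the ``no stopping criteria'' iteration bound extracted from Theorem~\ref{thm:strong-full} genuinely certifies an $\epsilon$-minimizer rather than merely an $\epsilon_{stop}/\bar\alpha$-minimizer for some unrelated $\epsilon_{stop}$, which is why the identification $\epsilon_{stop} = \bar\alpha\epsilon$ together with the observation that the two versions of the method coincide up to termination are both needed. Beyond that, the argument is a direct substitution into Theorem~\ref{thm:rate-lifting}.
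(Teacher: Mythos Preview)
Your proposal is correct and matches the paper's own argument essentially step for step: verify (A1)--(A3) as in the preceding discussion, take $K(x_0,\epsilon,\alpha)$ to be the simplified Du--Ruszczy\'nski bound with $\epsilon_{stop}=\bar\alpha\epsilon$, and then apply Theorem~\ref{thm:rate-lifting} with $p=2$ so that $\alpha\mapsto\epsilon/D^2$ yields $\bar\alpha=\min\{1,\epsilon/\rho D^2\}$. Your explicit remark that the stopping and non-stopping versions coincide up to termination is exactly the justification the paper gives in the sentence preceding the corollary.
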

\noindent This matches Kiwiel's general convergence rate of $O(\|x_0-x^*\|^4/\epsilon^3)$ up to logarithmic terms.

	\paragraph{Acknowledgments.} The author would like to thank Adrian Lewis for advising on the positioning of this work and Calvin Wylie for directing his attention to~\cite{Kiwiel2000}, which has been overlooked in some of the recent bundle method literature. 
	
	\bibliographystyle{plain}
	{\small \bibliography{references}}
\end{document}